\theoremstyle{plain}
\theoremstyle{remark}
\newtheorem{rem}{\protect\remarkname}
\theoremstyle{definition}
\theoremstyle{definition}
\newtheorem{definition}{Definition}
\theoremstyle{plain}
\newtheorem{lem}{\protect\lemmaname}
\theoremstyle{plain}
\newtheorem{cor}{\protect\corollaryname}
\theoremstyle{plain}
\newtheorem{prop}{Proposition}
\providecommand{\corollaryname}{Corollary}
\providecommand{\lemmaname}{Lemma}
\providecommand{\problemname}{Problem}
\providecommand{\remarkname}{Remark}
\providecommand{\theoremname}{Theorem}
\begin{document}
\title{Robust Stability Analysis of DC Microgrids \\with Constant Power Loads}
\author{Jianzhe~Liu,~\IEEEmembership{Student Member,~IEEE,}
        ~Wei~Zhang,~\IEEEmembership{Member,~IEEE,}
        ~and Giorgio~Rizzoni,~\IEEEmembership{Fellow,~IEEE}}
\thanks{J. Liu is with both the Center for Automotive Research and the Department
of Electrical and Computer Engineering, the Ohio State University, Columbus, OH 43210. Email: liu.2430@osu.edu.}
\thanks{W. Zhang is with the Department
of Electrical and Computer Engineering, the Ohio State University, Columbus, OH 43210. Email:   zhang.491@osu.edu.}
\thanks{G. Rizzoni is with both the Center for Automotive Research and the Department of Mechanical and Aerospace Engineering, the Ohio State University, Columbus,
OH, 43210. Email: rizzoni.1@osu.edu.}
\allowdisplaybreaks[4]

\maketitle
\begin{abstract}
This paper studies stability analysis of DC microgrids with uncertain constant power loads (CPLs). It is well known that CPLs have negative impedance effects, which may cause instability in a DC microgrid. Existing works often study the stability around a given equilibrium based on some nominal values of CPLs. However, in real applications, the equilibrium of a DC microgrid depends on the loading condition that often changes over time. Different from many previous results, this paper develops a framework that can analyze the DC microgrid stability for a given range of CPLs. The problem is formulated as a robust stability problem of a polytopic uncertain linear system. By exploiting the structure of the problem, we derive a set of sufficient conditions that can guarantee robust stability. The conditions can be efficiently checked by solving a convex optimization problem whose complexity does not grow with the number of buses in the microgrid. The effectiveness and non-conservativeness of the proposed framework are demonstrated using simulation examples.
\end{abstract}



%
\section{Introduction}
\setcounter{page}{1}
A DC microgrid is a direct current power network that consists of locally-controlled sources and loads~\cite{4263070,1626398,7438879}. Directly connecting DC components allows for a simple integration of renewable generations and reduces unnecessary power conversion losses~\cite{Justo2013}. DC microgrids are finding various applications in more electric aircraft, naval ships, data centers, among others~\cite{Zhao201518,6813657,6816073,6287047,5200469,7101872}. With advanced power electronic devices, the transient behavior in the power outputs of many loads can be neglected, and these loads can be modeled as constant power loads (CPLs)~\cite{7182770,1658410,7469404,7401083}. 

Stability analysis problems for DC microgrids with CPLs have been studied in the literature. The CPLs are nonlinear and exhibit a negative impedance V-I characteristic, which may cause instability of a DC microgrid~\cite{6415284,6189081}. Some researchers study the stability of a DC microgrid around a fixed equilibrium calculated based on some nominal values of the CPLs~\cite{3248412,6909049,7328761,7182770,6031929,7798761,zonetti2016tool,7798761}. One major challenge lies in the nonlinearity contributed by the CPLs. To tackle the nonlinearity, some linearize a DC microgrid at a given equilibrium and analyze the resulting linearized system. In~\cite{3248412}, Nyquist stability criteria is used to determine the stability of a given equilibrium. In~\cite{zonetti2016tool}, the authors give conditions on the existence of an equilibrium for a DC microgrid. Then, the eigenvalues of the linearized system matrix is analyzed~\cite{6909049}. Others approximate the nonlinear DC microgrid system with linear systems~\cite{7182770,6031929}. In~\cite{7182770}, the nonlinearity is treated as a bounded noise so that a DC microgrid can be modeled by a linear system with additive uncertainty. In~\cite{6031929}, fuzzy modeling method is adopted to approximate the nonlinear system with a series of linear systems. With the resulting linear systems, estimations of the region of attraction around some known equilibria can be obtained by iteratively exploiting quadratic Lyapunov functions. Still others have devised different forms of Lyapunov-like functions to deal with the nonlinearity~\cite{7798761,6031929}. For example, in~\cite{7798761} a potential function based on Brayton-Moser method~\cite{669065} is used to prove the local stability of a given equilibrium. In the meantime, stability analysis problems may also arise in some DC microgrid consensus problems such as fair load sharing and voltage regulations~\cite{Zhao201518,de2016power,6816073,6287047}. The key to solving such problems is shaping the system equilibria to have the desired consensus. This shaping is usually realized by dynamic consensus controllers~\cite{7061540}. For example, in~\cite{6287047}, an distributed integral controller is used for voltage regulation. Still, the consensus cannot be reached if the equilibrium is not stable. Both linearization~\cite{6816073} and Lyapunov-like method~\cite{de2016power} has been used for such stability analysis. In~\cite{6816073}, the linearization method is used with a given equilibrium for average voltage regulation and current sharing. In~\cite{de2016power}, Bregman storage functions~\cite{7798653} are utilized to solve a power sharing problem. The steady state of a DC microgrid with CPLs is studied in the paper. It is represented by a static model. The static model can be physically interpreted as a resistive circuit, which has no dynamic components such as inductors or capacitors. A dynamic consensus controller is used to make the resistive circuit reach some desired steady states. This act is equivalent to using a dynamic distributed algorithm to find the solution of algebraic equations. Some conditions with respect to resistance and negative impedance are shown to determine the convergence of the algorithm as well. Nevertheless, since the paper only considers a static model the convergence of the algorithm cannot ensure the stability of a DC microgrid with dynamic components. Modifications of the storage function similar to that of~\cite{7798761} could be used, but the knowledge of the equilibrium is still needed for the stability analysis.

Despite the rich literature on DC microgrids, existing works have several limitations. First, existing stability analysis methods are based on a fixed equilibrium and require to know its exact value a priori~\cite{3248412,6909049,7328761,7182770,6031929,7798761,zonetti2016tool,7798761,7061540}. However, in real applications, the system equilibrium depends on uncertain CPLs and changes over time~\cite{6759911,6887330}. Second, existing works mostly consider single-bus DC microgrids~\cite{6909049,6031929,7182770,7469404}. It is unclear how the proposed methods can be extended to more general topology that often arises in real applications~\cite{6575114}. Third, many methods proposed in the literature are ad hoc in nature and are specific to a particular application scenario~\cite{6816073,6909049,6287047,6678794,de2016power}. There lacks a systematic method to analyze the stability of the closed loop system, especially when uncertain CPLs and general topology are considered.

In this paper, we formulate and study the robust stability problem of a general DC microgrid with uncertain CPLs. We assume that the power of each CPL may take arbitrary values within a given interval. As a result, the vector of the overall CPL power lies in a polytopic uncertainty set. Different CPL vectors in the uncertainty set may lead to different system equilibria. We call the system locally robustly stable if all the resulting equilibria are locally exponentially stable. We show that checking robust stability of a general DC microgrid can be formulated as a robust stability analysis problem of linear systems with polytopic uncertainties on system matrices. Existing results on linear uncertain systems~\cite{Scherer2001361,486646,1299019,914559,940939} can be used to derive sufficient locally robust stability conditions; however, these conditions are computationally challenging to verify and may be over conservative. For example, in~\cite{7223059} the authors use state space sampling method to have a polytopic set with $n$ vertices and solve an LMI problem with $2^n$ constraints. In this paper, we show that for our DC microgrid robust stability problems, the parameter uncertainty lies in the diagonal entries of the system matrix. We take advantage of this structure and derive a set of sufficient conditions that can be efficiently checked to guarantee locally robust stability. The conservativeness issue is also discussed in the paper, we obtain an approximated quantification of the conservativeness of the proposed work. By virtue of the quantified conservativeness, a quantitative relationship between the CPL power and voltage is derived, which is useful for DC microgrid operations. For example, with given CPL power range one can compute a constraint for CPL voltage for stability. When the voltage constraint is satisfied during operation, the DC microgrid can be guaranteed to be locally  exponentially stable. To our knowledge, the robust stability problem formulated in this paper has not been studied in the DC microgrid literature, and our results provide new insights for stability analysis of DC microgrids with uncertain CPLs.

The rest of the paper is organized as follows, Section~\ref{sec:model} presents a model for a general DC microgrid with CPLs, Section~\ref{sec:rob} develops the locally robust stability framework, Section~\ref{sec:case} uses a simulation study to verify the main results, and Section~\ref{sec:conclusion} concludes the paper and discusses future research directions.

\section{General DC Microgrid Modelling}\label{sec:model}
In this paper, we consider an $n$-bus DC microgrid shown in Fig.~\ref{fig:topo}. These buses form an undirected graph defined by $\mathcal{G}=(\mathcal{V},\mathcal{E})$ where each element in the vertex set $\mathcal{V}$ represents one DC microgrid bus, and each element in the edge set $\mathcal{E}$ represents one transmission line between two buses. If $(k,j)\in \mathcal{E}$, we say that the $j^{\text{th}}$ bus is a neighboring bus of the $k^{\text{th}}$ bus. Let the set $\mathcal{N}_k$ have the indices of all the neighboring buses of the $k^{\text{th}}$ bus. Let $W\in \mathbb{R}^{n\times n}$ be the adjacency matrix. 
\vspace{-0.3cm}
\begin{figure}[H]
\centering
\includegraphics[width=0.47\textwidth]{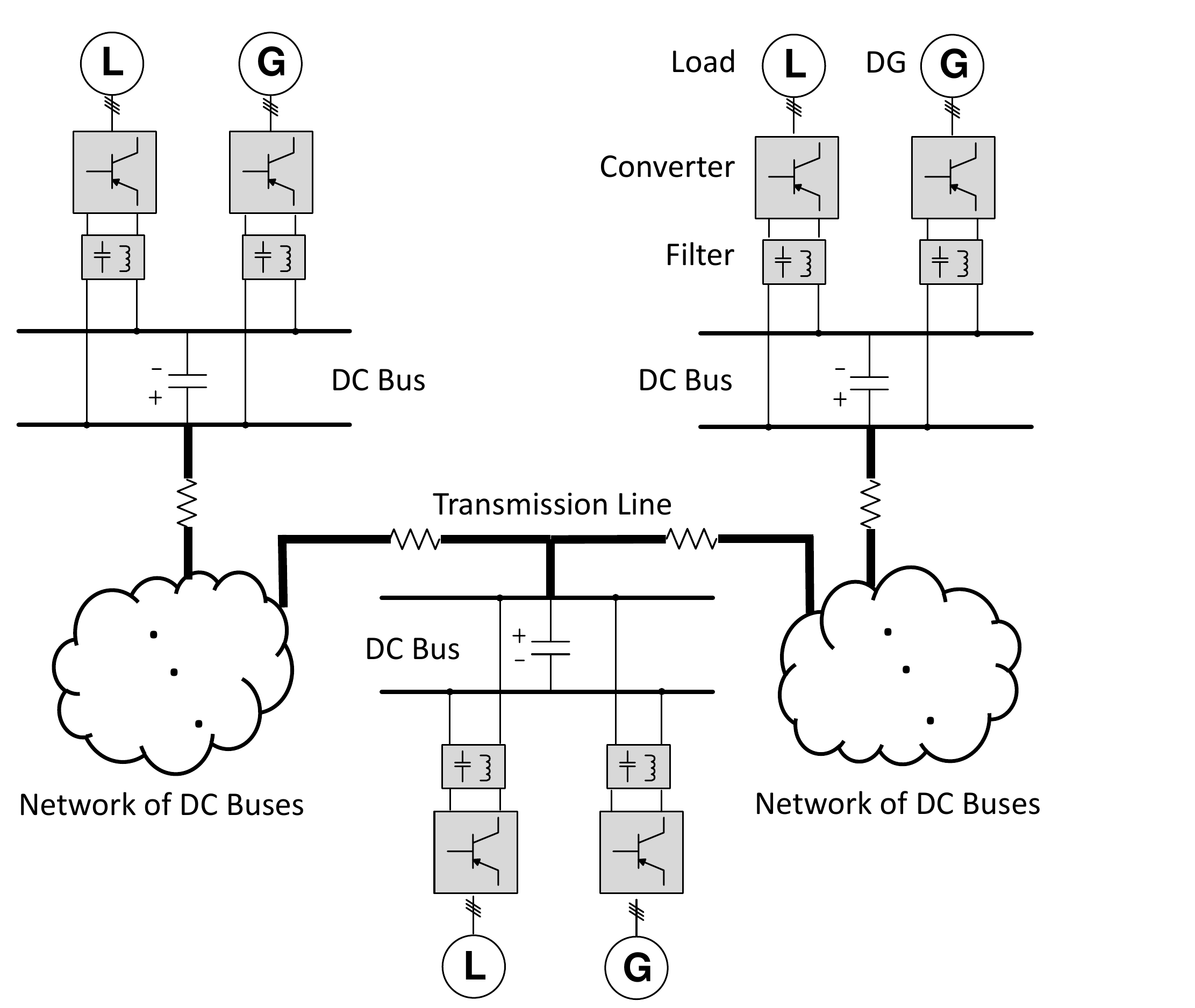}
\caption{\label{fig:topo}The $n$-bus DC microgrid}
\end{figure} 

\vspace{-0.5cm}
\begin{figure}[H]
\centering
\includegraphics[width=0.5\textwidth]{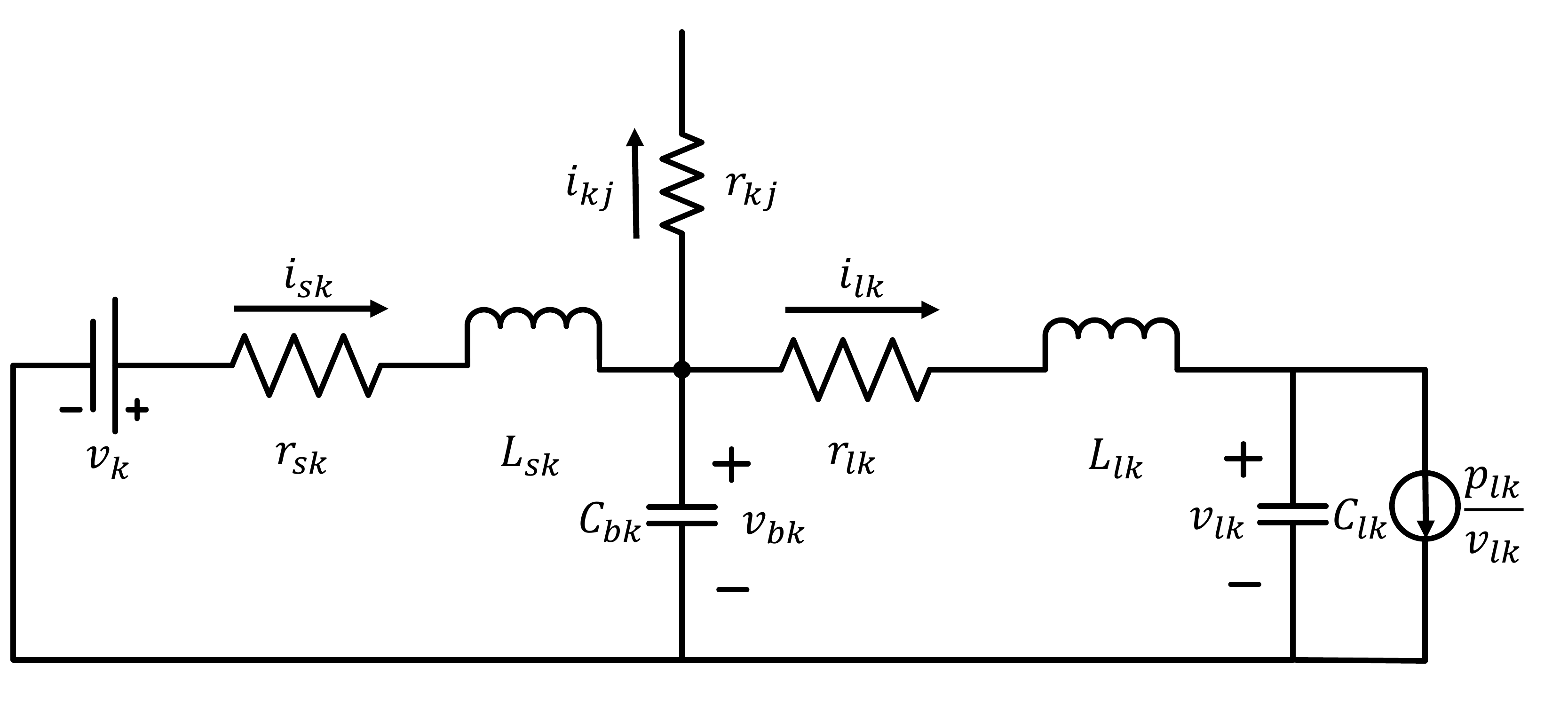}
\caption{\label{fig:1bus}The $k^{\text{th}}$ bus of the DC microgrid}
\end{figure}

To simplify discussion, we assume that there is one controllable voltage source and one constant power load (CPL) on each bus. The equivalent circuit model of the $k^{\text{th}}$ bus is depicted in Fig.~\ref{fig:1bus}. In this section, we will first develop a dynamic model for bus $k$, and then construct the overall dynamic model for the entire microgrid. Note that the modeling method can be easily extended for DC microgrids with multiple (or none) sources or loads on some buses.

\subsection{Dynamic Model for Bus $k$}
\subsubsection{Voltage Source} We assume that each bus has one controllable voltage source with standard V-I droop control~\cite{5546958}. Let $v_k$ and $i_{sk}$ be the terminal voltage and output current of the source on Bus $k$, respectively. The source connects with the rest of the microgrid through a resistor and an inductor. The current $i_{sk}$ is the state variable for the source. The dynamics of $i_{sk}$ are given by,
\begin{equation}
\left\{ \begin{array}{l}
L_{sk}\frac{di_{sk}}{dt}=v_k-r_{sk} i_{sk}-v_{bk}\\
v_k=v_k^{\text{ref}}-d_k i_{sk}
\end{array},\right.\label{eq:isi}
\end{equation}
where $L_{sk}$ and $r_{sk}$ are the source inductance and resistance, $v_{bk}$ is the voltage of the $k^{\text{th}}$ DC bus, $v_k^{\text{ref}}$ and $d_k$ are the droop reference and droop gain, respectively. Here, we have ignored the internal dynamics of the power electronic devices, which is a standard assumption for microgrid stability and control problems~\cite{Zhao201518}.  


\subsubsection{CPL} The load is a CPL and is modeled as a current sink whose current injection is given by the CPL power divided by the CPL voltage~\cite{1658410,6415284,6189081}. The CPL is connected with the microgrid through an equivalent RLC filter. Let $v_{lk}$ and $i_{lk}$ be the terminal CPL voltage and the current injection into the filter, respectively. The CPL voltage $v_{lk}$ equals the voltage across the capacitor in parallel with the current source. The current $i_{lk}$ flows from the DC bus capacitor and feeds the load through an inductor and a resistor. Here, $v_{lk}$ and $i_{lk}$ are two state variables of the CPL. The dynamics of the state variables are given by,
\begin{equation}
\left\{ \begin{array}{l}
L_{lk}\frac{di_{lk}}{dt}=v_{bk}-r_{lk}i_{lk}-v_{lk}\\
C_{lk}\frac{dv_{lk}}{dt}=i_{lk}-\frac{p_k}{v_{lk}}
\end{array},\right.
\label{eq:cpli}
\end{equation}
where $L_{lk}$, $r_{lk}$, $C_{lk}$ are load inductance, resistance, and capacitance, respectively, and $p_k$ is the power of the CPL. In~\eqref{eq:cpli}, the current injection into the current sink is $p_k/v_{lk}$, making the power injection to be $p_k$. We assume that $p_k$ is uncertain and may take any value in the interval $[\underline{p_k},\overline{p_k}]$ where $\overline{p_k}\geq \underline{p_k}\geq 0$. One can see that the CPL introduces an additive nonlinear term $p_k/v_{lk}$ in~\eqref{eq:cpli}.

\subsubsection{DC Link Capacitor} The DC bus voltage of the $k^{\text{th}}$ bus, $v_{bk}$, is the voltage across the DC link capacitor. It is determined by the source current, the load current, and the transiting currents between the neighboring buses. The DC bus voltage, $v_{bk}$, is the state variable for the DC link capacitor, and its dynamics are given by,
\begin{equation}
\left\{ \begin{array}{l}
C_{bk}\frac{dv_{bk}}{dt}=i_{sk}-i_{lk}-\sum_{j\in \mathcal{N}_k}i_{kj}\\
i_{kj}=\frac{v_{bk}-v_{bj}}{r_{kj}}
\end{array},\right.\label{eq:busi}
\end{equation}
where $\mathcal{N}_k$ is the set of indices of all the neighboring buses of the $k^{\text{th}}$ bus, $C_{bk}$ and $r_{kj}$ are the DC link capacitance of the $k^{\text{th}}$ bus and the equivalent resistance of the transmission line between the $k^{\text{th}}$ and $j^{\text{th}}$ buses, respectively. Note that $\mathcal{N}_k$ is determined by the microgrid network topology.
\subsection{$n$-bus DC Microgrid}
We now combine the individual models of the sources, CPLs, and the DC link capacitors on all the $n$ buses to obtain the overall model of the DC microgrid.

Let $i_x$, $v_x\in \mathbb{R}^{2n}$, $p$, $v$, and $v^{\text{ref}}$ $\in \mathbb{R}^n$, be the vectors of all the source and CPL currents, DC bus and CPL voltage, load power, source voltage, and droop references, respectively. They are given as follows,
\begin{align}
&i_x\!\!=\!\![i_{s1},\!\cdots\!,i_{sn},i_{l1},\!\cdots\!,i_{ln}]^T\!,\! v_x\!\!=\!\![v_{\text{B}1},\cdots,v_{\text{B}n},v_{l1},\cdots,v_{ln}]^T\!\!, \nonumber\\
&p\!\!=\!\![p_1\!,\!\cdots\!,\!p_n]^T, v\!\!=\!\![v_{1}\!,\!\cdots\!,\!v_{n}]^T\!, v^{\text{ref}}\!\!=\!\![v^{\text{ref}}_1\!,\!\cdots\!,\!v^{\text{ref}}_n]^T.\nonumber
\end{align}
In addition, define a vector $x\in \mathbb{R}^{4n}$ and a nonlinear function $h(\cdot,\cdot): \mathbb{R}^n\times \mathbb{R}^{4n}\to \mathbb{R}^n$ as follows,
\begin{align}
x=[i_x^T,v_x^T]^T,\quad h(p,x)=[-\frac{p_1}{v_{l1}},\cdots,-\frac{p_n}{v_{ln}}]^T.\nonumber
\end{align}
The vector $x$ stacks all the current and voltage states of the components, and the nonlinear function $h(p,x)$ contains the additive nonlinear terms in~\eqref{eq:cpli}.

Let $D_k\in \mathbb{R}^{4n\times 4n}$, and only its $k^{\text{th}}$ diagonal entry is non-zero and is 1. 

With the above notation, the overall microgrid dynamics can be written in the following compact form:
\begin{equation}
\dot{x}=Ax+Bv^{\text{ref}}+Ch(p,x),\quad A=A_0-B\sum_{k=1}^n d_kD_k, \label{eq:gencls}
\end{equation}
where the matrices $A$, $A_0\in \mathbb{R}^{4n\times 4n}$, and $B$, $C\in \mathbb{R}^{4n\times n}$. The matrix $A_0$ is the system matrix of the open loop DC microgrid without the V-I droop control, and it is determined by the system parameters and topology. When the droop control is utilized, the matrix $A$ is the system matrix of the closed loop system. Other than the parameters and topology, $A$ depends on the droop gains as well.

We now briefly discuss how the matrices $A$, $B$, and $C$ can be constructed from individual bus models~\eqref{eq:isi}-\eqref{eq:busi} as well as the system topology. Substituting $i_{sk}=x_k$, $v_{bk}=x_{2n+k}$ into \eqref{eq:isi} yields,
\begin{equation}
\frac{di_{sk}}{dt}=\dot{x}_k=\frac{1}{L_{sk}}v_k^{\text{ref}}-\frac{r_{sk}+d_k}{L_{sk}}x_k-\frac{1}{L_{sk}}x_{2n+k},\nonumber
\end{equation}
thus, $A_{kk}=-(r_{sk}+d_k)/L_{sk}$, $A_{k(2n+k)}=-1/L_{sk}$, $B_{kk}=1$. In addition, $C=\sum_{k=1}^n1/C_{lk}D_{3n+k}$ from~\eqref{eq:cpli}. Furthermore, from $\eqref{eq:busi}$, if $j\in\mathcal{N}_k$, then $A_{(2n+k)(2n+j)}=1/r_{kj}$, and $A_{(2n+k)(2n+k)}=-|\mathcal{N}_k|/r_{kj}$, where $|\cdot|$ gives the cardinality of a set. This demonstrates how the matrix $A$ depends on the system topology.

\begin{rem}
In deriving the microgrid model~\eqref{eq:gencls}, we have adopted a commonly used resistive transmission line model~\cite{Zhao201518}. The modeling method can be applied to other transmission line types. For example, regarding transmission line $\Pi$-model~\cite{kundur1994power} that has an equivalent capacitor on each end of the line, the capacitors are in parallel with the DC bus link capacitors and can be directly combined with them. Hence, the resulting model is still in the form of~\eqref{eq:gencls} with modified $C_{bk}$ parameters. For $\Pi$-model with equivalent line inductance~\cite{kundur1994power}, another state that represents the line current can be added. It will also result in a microgrid model similar to~\eqref{eq:gencls} with an increased state dimension. 
\end{rem} 

\begin{rem}
Recently, different control laws have been proposed to replace the V-I droop control~\cite{6816073,6813657,7182770}. The modeling method presented in this paper and the corresponding stability results can be easily extended to DC microgrid with these control laws. In Section~\ref{sec:robdiscus}, we exemplify it by applying the proposed methods to a DC microgrid with distributed control.
\end{rem}

\begin{rem}
Depending on the control of the power electronic devices, there are different types of sources~\cite{720325,903988}. With controlled current output, one source can be seen as a current source~\cite{4812332}. If the output current is regulated to be linear in the system state, the modeling method of the source can be directly applied. For instance, with current droop control~\cite{720325}, the current output of the source is linear in the output voltage, $i_{sk}=i_{sk}^{\text{ref}}-d_kv_{k}$, where $i_{sk}^{\text{ref}}$ and $d_k$ are constants, and source voltage $v_{k}$ is measured at each instant. Thus, the doop controlled current source can be modeled as a constant current source, $i_{sk}^*$, in parallel to a virtual resistance, $1/d_k$. We can use the following model to represent the source: 
\begin{equation}
\left\{ \begin{array}{l}
L_{sk}\frac{di_{sk}}{dt}=v_k-r_{sk} i_{sk}-v_{bk}\\
i_{sk}=i_{sk}^{\text{ref}}-d_k v_{k}
\end{array},\right.\nonumber
\end{equation}
and this linear model can be directly incorporated into model~\eqref{eq:gencls}. Meanwhile, with output power under control a source can be seen as a power source~\cite{903988}. Let $p_{sk}$ be the power output of the source. If a power source is involved in the system, it can be modeled as a current source with nonlinear power output in the state, $p_{sk}/v_{k}$, as we model the CPL. Hence, the main idea and results of this paper can be extended to the power source as well. 
\end{rem}

\section{Robust Stability Framework}\label{sec:rob}
\subsection{Robust Stability under Uncertain CPLs}\label{sec:robprob}
The stability analysis of system~\eqref{eq:gencls} is crucial for DC microgrid applications since the CPLs might cause instability. In this paper, the uncertain CPL power $p$ is assumed to be physically bounded in a polytopic set $\mathcal{P}$ defined as follows,
\begin{align}
\mathcal{P}=\left\{p:p_k\in [\underline{p_k},\overline{p_k}],k=1,\cdots,n\right\}.\nonumber
\end{align}

We aim at analyzing the stability of system~\eqref{eq:gencls} with uncertain $p\in \mathcal{P}$. The challenge of the analysis lies in that the equilibria of the system are difficult to find~\cite{4433425}. With $n$ CPLs, one needs to solve $n$ quadratic equations to obtain the equilibria of system~\eqref{eq:gencls}. In general, there are $2^n$ solutions and there are no general results to determine which one is the actual equilibrium the system will converge to~\cite{7108029}. Furthermore, the solution of the equations changes with respect to $p$. With uncertain CPL power profile $p$, the set of possible equilibria cannot be characterized for a general DC microgrid. 

Due to the lack of a predetermined equilibrium, we study the stability of all the possible equilibria that are feasible for the microgrid. For a vector $p\in \mathcal{P}$, let $x_e\in \mathbb{R}^{4n}$ be an equilibrium of system~\eqref{eq:gencls}. At $x_e$, let the steady state voltage of the $k^{\text{th}}$ CPL be $v_{lk}^e$, define a vector $v_{l}^e=[v_{l1}^e,\cdots,v_{ln}^e]^T$. Assume that an operationally feasible CPL voltage vector has to lie inside a known constraint set $\mathcal{V}_l^e$ in the following form,
\begin{equation}
\mathcal{V}_l^e=\left\{ v_l^e\in \mathbb{R}^{n}:v_{lk}^e\in [\underline{v_{lk}^e},\overline{v_{lk}^e}],k=1,\cdots,n\right\},\nonumber
\end{equation}
where $\overline{v}^e_{lk}\geq \underline{v^e_{lk}}\geq 0$.

As $p$ varies in $\mathcal{P}$, $x_e$ changes accordingly. Some equilibria might not be physically admissible by the circuit. We thus focus on the following set of equilibria whose corresponding CPL voltage lies in $\mathcal{V}_l^e$, 
\begin{equation}
\mathcal{X}_e\!(\!\mathcal{P},\!\!\mathcal{V}_l^e\!)\!\!=\!\!\left\{\!\!x_e\!\!\in\!\! \mathbb{R}^{4n}\!\!:\! \!Ax_e\!\!+\!\!Bv^{\text{ref}}\!\!\!+\!\!Ch(p,x_e)\!=\!0,\!p\!\in\!\mathcal{P},\!v_l^e\!\in\!\mathcal{V}_l^e\!\right\}\!\!.\nonumber
\end{equation}

\begin{definition}\label{def:robstab}
System~\eqref{eq:gencls} is said to be locally robustly stable if any equilibrium in the set $\mathcal{X}_e(\mathcal{P},\mathcal{V}_l^e)$ is locally exponentially stable.
\end{definition}

The above defined locally robust stability is not exactly the same as the definition used in the classical robust control literature~\cite{leitmann1979guaranteed,50357}. The classical robust control typically studies the stability of one predetermined equilibrium under uncertainty. Note that such an equilibrium is not available for DC microgrids with uncertain CPLs, we study all the equilibria lying in $\mathcal{X}_e(\mathcal{P},\mathcal{V}_l^e)$. 
\subsection{Robust Stability Analysis}\label{sec:robresult}
This subsection develops methods to analyze the locally robust stability defined in Definition~\ref{def:robstab}. We derive sufficient conditions to guarantee the locally robust stability of system~\eqref{eq:gencls}. Computationally efficient convex optimization problems are formulated to facilitate the analysis.

Let $x_e$ be an arbitrary equilibrium in $\mathcal{X}_e(\mathcal{P},\mathcal{V}_l^e)$. Linearizing system~\eqref{eq:gencls} around $x_e$ yields,
\begin{equation}
\dot{z}=A_zz, \quad A_z=A+\sum_{k=1}^n\delta_kD_{3n+k},\label{eq:genlin}
\end{equation}
where $z\in \mathbb{R}^{4n}$ is the state variable of the linearized system, $A_z\in \mathbb{R}^{4n\times 4n}$ is the linearized system matrix, and $\delta_k=p_k/(C_{lk}(v^e_{lk})^2)$, which is obtained by linearizing the $k^{\text{th}}$ entry of $Ch(p,x)$ at $x_e$. Since $p_k$ and $v_{lk}^e$ vary in $[\underline{p_k},\overline{p}_k]$ and $[\underline{v_{lk}^e},\overline{v}_{lk}^e]$, respectively, $\delta_k$ takes value in an interval, $[\underline{\delta_k},\overline{\delta}_k]$, where $\underline{\delta_k}=\underline{p_k}/(C_{lk}(\overline{v^e_{lk}})^2)$ and $\overline{\delta}_k=\overline{p_k}/(C_{lk}(\underline{v^e_{lk}})^2)$. Note that the overall uncertainty of $p_k$ and $v_{lk}^e$ is captured by $\delta_k$.

The system matrix $A_z$ is an uncertain matrix, and it belongs to a set $\mathcal{A}_z$ defined as follows,
\begin{equation}
\mathcal{A}_z\!\!=\!\!\left\{ \!\!A_z\!\!\in\!\!\mathbb{R}^{4n\times 4n}\!:\! A_z\!\!=\!\!A\!\!+\!\!\sum_{k=1}^n \delta_kD_{3n+k},\delta_k\in [\underline{\delta_k},\overline{\delta}_k]\!\!\right\}.\label{eq:Az}
\end{equation}
The set $\mathcal{A}_z$ is a polytopic set, and it has $2^n$ vertices. Let the matrix $A_{j}^v$ be the $j^{\text{th}}$ vertex of the set $\mathcal{A}_z$. It can be written in the form: $A^v_j=A+\sum_{k=1}^n\delta_kD_{3n+k}$ with $\delta_k=\overline{\delta}_k$ or $\delta_k=\underline{\delta_k}$. 

Recall that a matrix is Hurwitz stable if all of its eigenvalues have negative real parts. Given a CPL uncertainty set $\mathcal{P}$ and a voltage constraint set $\mathcal{V}_{l}^e$, all the possible equilibria lie inside $\mathcal{X}_e(\mathcal{P},\mathcal{V}_l^e)$ and all the possible linearized system matrices lie inside $\mathcal{A}_z$. Therefore, system~\eqref{eq:gencls} is locally robustly stable if all the matrices in $\mathcal{A}_z$ are Hurwitz stable. Notice that there are infinitely many elements in the set $\mathcal{A}_z$. As a standard result in linear uncertain systems, to ensure stability of all the matrices in $\mathcal{A}_z$, it suffices to have a common Lyapunov function for all the vertices of $A_z$~\cite{7223059,mansour1988sufficient,250472}. 

For a matrix $M$, let $M\succ 0$ and $M\prec 0$ represent that $M$ is positive definite and negative definite, respectively, let $M\succeq 0$ and $M\preceq 0$ represent that $M$ is positive semidefinite and negative semidefinite, respectively. 

\begin{lem}\label{lem:2nLMI}
System~\eqref{eq:gencls} is locally robustly stable if $\exists P=P^T\succ 0$ satisfying
\begin{align}
PA_j^v+(A_j^v)^TP\prec 0,\quad j=1,\cdots,2^n.\label{eq:2n}
\end{align}
\end{lem}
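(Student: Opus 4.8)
The plan is to invoke the standard quadratic-stability (common Lyapunov function) argument for polytopic uncertain linear systems, together with the convexity of the vertex set $\mathcal{A}_z$. First I would recall from the discussion preceding the lemma that, by Definition~\ref{def:robstab}, it suffices to show that every matrix $A_z\in\mathcal{A}_z$ is Hurwitz stable, since each equilibrium $x_e\in\mathcal{X}_e(\mathcal{P},\mathcal{V}_l^e)$ gives rise to a linearized system~\eqref{eq:genlin} whose matrix lies in $\mathcal{A}_z$, and exponential stability of the linearization implies local exponential stability of the equilibrium. So the task reduces to: given a single $P=P^T\succ 0$ with $PA_j^v+(A_j^v)^TP\prec 0$ for all $2^n$ vertices, conclude $PA_z+A_z^TP\prec 0$ for every $A_z\in\mathcal{A}_z$.

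The key step is the convexity observation. Any $A_z\in\mathcal{A}_z$ has the form $A+\sum_{k=1}^n\delta_kD_{3n+k}$ with $\delta_k\in[\underline{\delta_k},\overline{\delta}_k]$; writing $\delta_k=\lambda_k\overline{\delta}_k+(1-\lambda_k)\underline{\delta_k}$ with $\lambda_k\in[0,1]$, one sees that $A_z$ is a convex combination of the $2^n$ vertex matrices $A_j^v$. More precisely, since $A_z$ depends affinely (indeed, in a multilinear fashion) on the tuple $(\delta_1,\dots,\delta_n)$ and the feasible tuples form the box $\prod_k[\underline{\delta_k},\overline{\delta}_k]$, whose extreme points are exactly the $2^n$ corner tuples, we can write $A_z=\sum_{j=1}^{2^n}\mu_j A_j^v$ for some weights $\mu_j\ge 0$ with $\sum_j\mu_j=1$. (Concretely, $\mu_j=\prod_{k}\lambda_k^{[k\in S_j]}(1-\lambda_k)^{[k\notin S_j]}$ where $S_j$ indexes which coordinates take the upper value at vertex $j$.) Then the map $A_z\mapsto PA_z+A_z^TP$ is linear, so
\[
PA_z+A_z^TP=\sum_{j=1}^{2^n}\mu_j\bigl(PA_j^v+(A_j^v)^TP\bigr).
\]
Each summand is negative definite by hypothesis~\eqref{eq:2n}, the $\mu_j$ are nonnegative and not all zero, and a nonnegative (nontrivial) combination of negative definite matrices is negative definite; hence $PA_z+A_z^TP\prec 0$.

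Finally I would close the Lyapunov argument: for the linearized system $\dot z=A_zz$, take $V(z)=z^TPz$; then $\dot V=z^T(PA_z+A_z^TP)z\le -\epsilon\|z\|^2$ for some $\epsilon>0$ uniform over the (compact) set $\mathcal{A}_z$, which yields exponential decay of $z$ and therefore Hurwitz stability of $A_z$. Since this holds for every $A_z\in\mathcal{A}_z$, and hence for the linearization at every admissible equilibrium, system~\eqref{eq:gencls} is locally robustly stable in the sense of Definition~\ref{def:robstab}.

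The only delicate point — and the one I would state carefully rather than belabor — is the convex-combination representation of a generic $A_z$ in terms of the $2^n$ vertices; everything else (linearity of the Lyapunov inequality, the semidefinite-cone arithmetic, and the converse Lyapunov step) is routine. No heavy machinery is needed beyond the standard fact that quadratic stability certified at the vertices of a polytope certifies it on the whole polytope.
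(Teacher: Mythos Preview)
Your proposal is correct and follows essentially the same approach as the paper. The paper does not give a formal proof of this lemma; it only supplies a short remark observing that $U(z)=z^TPz$ is a common Lyapunov function for the vertices, that any $A_z\in\mathcal{A}_z$ is a convex combination of the vertices, and hence every $A_z$ is Hurwitz---you have simply written out these standard steps in more detail, including the explicit product-form convex weights and the uniform $\epsilon$-bound on $\dot V$.
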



%
%
\begin{rem}
If constraints~\eqref{eq:2n} are all feasible for some $P=P^T\succ 0$, $U(z)=z^TPz$ is a common Lyapunov function for the vertices in $\mathcal{A}_z$. Since $\mathcal{A}_z$ is a polytopic set, any matrix $A_z\in \mathcal{A}_z$ can be expressed as a convex combination of its vertices~\cite{mansour1988sufficient}. Thus, the function $U(z)$ is a common Lyapunov function for all the matrices in $\mathcal{A}_z$ as well, and any matrix $A_z\in \mathcal{A}_z$ is Hurwitz stable by Lyapunov method for stability~\cite{250472}. 
\end{rem}

Checking the condition given in Lemma~\ref{lem:2nLMI} is a linear matrix inequalities (LMIs) feasibility problem, which can be checked efficiently~\cite{boyd2004convex}. There are $2^n$ LMIs involved in the feasibility problem. Even though LMIs can be checked efficiently, when $n$ is nontrivial the computation burden could still be huge. This limits the applicability of Lemma~\ref{lem:2nLMI} for general DC microgrid stability analysis. 

This limitation can be eliminated by making use of the special structure of the matrix $A_z$. One can observe that each $\delta_k$ lies on one diagonal entry of $A_z$. We take advantage of this structure and transform the condition from checking $2^n$ LMIs to solving a convex optimization problem whose complexity does not increase with $n$.

Let $\overline{A}_z\in \mathcal{A}_z$ be the element-wise maximum of all the matrices in $\mathcal{A}_z$, i.e. it is given by
\begin{equation}
\overline{A}_z=A+\sum_{k=1}^{n}\overline{\delta}_kD_{3n+k}.\nonumber
\end{equation}
Let the largest value of all $\overline{\delta}_k$, $k=1,\cdots,n$, be $\overline{\delta}_{\text{max}}$. It is worth pointing out that $\overline{A}_z$ and $\overline{\delta}_{\text{max}}$ are deterministic and can be obtained easily. In addition, let $\Delta \delta_k=\overline{\delta}_k-\delta_k$ and let $Q_j$ be a matrix defined as follows,
\begin{equation}
Q_j=P\sum_{k=1}^n \Delta \delta_k D_{3n+k}+(\sum_{k=1}^n \Delta \delta_k D_{3n+k})^TP.\label{eq:thmQ}
\end{equation}

\begin{lem}\label{lem:cvx}
System~\eqref{eq:gencls} is locally robustly stable if $\exists P=P^T\succ 0$, $\gamma>0$, and $t>0$ such that the following optimization problem is feasible, 
\begin{align}
&\min_{P,t,\gamma} t-\gamma  \label{eq:LMImin}\\
\textnormal{subj. to: }&P\overline{A}_z+\overline{A}_z^TP\preceq -\gamma I_{4n},\nonumber\\
&P\preceq tI_{4n},\quad 2t\cdot \overline{\delta}_{\text{max}}< \gamma,\nonumber\\
&P=P^T,P\succ 0,\gamma>0,t>0.\nonumber
\end{align}
\end{lem}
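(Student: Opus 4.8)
The plan is to reduce the claimed optimization problem to the $2^n$-vertex LMI condition of Lemma~\ref{lem:2nLMI}, so that feasibility of \eqref{eq:LMImin} implies the existence of a common quadratic Lyapunov function for all vertices $A_j^v$. The first step is to express an arbitrary vertex in terms of the element-wise maximum: since $A_j^v = A + \sum_{k=1}^n \delta_k D_{3n+k}$ with each $\delta_k \in \{\underline{\delta_k}, \overline{\delta}_k\}$, we can write $A_j^v = \overline{A}_z - \sum_{k=1}^n \Delta\delta_k D_{3n+k}$, where $\Delta\delta_k = \overline{\delta}_k - \delta_k \ge 0$. Then for any $P = P^T \succ 0$,
\begin{align*}
PA_j^v + (A_j^v)^TP = \left(P\overline{A}_z + \overline{A}_z^TP\right) - Q_j,
\end{align*}
with $Q_j$ as in \eqref{eq:thmQ}. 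The first constraint of \eqref{eq:LMImin} bounds the first term above by $-\gamma I_{4n}$, so it remains to show that $-Q_j$ is dominated by $\gamma I_{4n}$, i.e. $Q_j \succ -\gamma I_{4n}$, or a slightly stronger spectral-norm bound $\|Q_j\|_2 < \gamma$ that makes $PA_j^v + (A_j^v)^TP \prec 0$.

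The second step is to bound $\|Q_j\|_2$. Write $S_j = \sum_{k=1}^n \Delta\delta_k D_{3n+k}$, a diagonal positive semidefinite matrix whose nonzero entries are the $\Delta\delta_k$. Its spectral norm is $\max_k \Delta\delta_k \le \max_k \overline{\delta}_k = \overline{\delta}_{\max}$ (using $\Delta\delta_k = \overline{\delta}_k - \delta_k \le \overline{\delta}_k$ since $\delta_k \ge \underline{\delta_k} \ge 0$). Then $Q_j = PS_j + S_j^TP$, so by the triangle inequality and submultiplicativity of the spectral norm, $\|Q_j\|_2 \le 2\|P\|_2\|S_j\|_2 \le 2\|P\|_2\,\overline{\delta}_{\max}$. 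The constraint $P \preceq tI_{4n}$ (with $P\succ 0$) gives $\|P\|_2 \le t$, hence $\|Q_j\|_2 \le 2t\,\overline{\delta}_{\max} < \gamma$ by the third constraint $2t\cdot\overline{\delta}_{\max} < \gamma$. Consequently $Q_j \succ -\gamma I_{4n}$ for every $j$, and combining with the first constraint yields
\begin{align*}
PA_j^v + (A_j^v)^TP = \left(P\overline{A}_z + \overline{A}_z^TP\right) - Q_j \preceq -\gamma I_{4n} - Q_j \prec -\gamma I_{4n} + \gamma I_{4n} = 0.
\end{align*}

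The final step is bookkeeping: the above holds for all $j = 1,\dots,2^n$ with the \emph{same} $P = P^T \succ 0$ supplied by the feasible point of \eqref{eq:LMImin}, which is precisely hypothesis \eqref{eq:2n} of Lemma~\ref{lem:2nLMI}; invoking that lemma gives local robust stability of system~\eqref{eq:gencls}. The objective $t - \gamma$ plays no role in the logical implication — any feasible point suffices — so it can be ignored for the proof (it is presumably there to encourage feasibility / reduce conservativeness). The main subtlety, and the step I would be most careful about, is the passage from the semidefinite bound $P\overline{A}_z + \overline{A}_z^TP \preceq -\gamma I$ plus the norm bound on $Q_j$ to a strict inequality $PA_j^v + (A_j^v)^TP \prec 0$: one must make sure the chain of inequalities is arranged so that the strictness in $2t\overline{\delta}_{\max} < \gamma$ is actually used to beat the non-strict $\preceq$, which it does since $-Q_j \prec \gamma I_{4n}$ strictly. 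Everything else is routine matrix-norm manipulation that I would not spell out in full.
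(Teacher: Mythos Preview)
Your proof is correct and follows essentially the same route as the paper's own proof in Appendix~A: write $A_j^v=\overline{A}_z-\sum_k\Delta\delta_k D_{3n+k}$, split $PA_j^v+(A_j^v)^TP$ into $P\overline{A}_z+\overline{A}_z^TP$ minus $Q_j$, and bound $\|Q_j\|_2\le 2\|P\|_2\,\overline{\delta}_{\max}\le 2t\,\overline{\delta}_{\max}<\gamma$ to force negative definiteness for all $2^n$ vertices, then invoke Lemma~\ref{lem:2nLMI}. Your handling of the strict/non-strict inequality chain and the observation that the objective $t-\gamma$ is irrelevant to feasibility are, if anything, slightly more careful than the paper's version.
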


\begin{rem}
The proof of Lemma~\ref{lem:cvx} is reported in Appendix A. Instead of solving the feasibility problem given in Lemma~\ref{lem:2nLMI} that involves $2^n$ LMIs, the condition derived in Lemma~\ref{lem:cvx} allows us to verify the locally robust stability by solving a convex optimization problem with only one set of LMIs. One explanation is that for Lemma~\ref{lem:2nLMI}, we need to check the stability of $2^n$ DC microgrids with different CPL power and steady state voltage. In Lemma~\ref{lem:cvx}, we only check the stability of one DC microgrid in an extreme case where each CPL is operated at its own maximum power and the lowest steady state voltage. Meanwhile, we find an upper bound on the largest difference between the parametric uncertainty in this case and those in the others. The satisfaction of this upper bound along with the stability of the DC microgrid in the extreme case guarantees the locally robust stability of the DC microgrid. 
\end{rem}

The feasibility of problem~\eqref{eq:LMImin} guarantees the locally robust stability of system~\eqref{eq:gencls}. The cost function of the problem is motivated by the structure of the constraints. When there exists multiple solutions, cost function~\eqref{eq:LMImin} selects the ``optimal'' solution that minimizes the function $t-\gamma$. The cost function, however, does not affects the feasibility of the problem, and any other convex cost functions can be utilized. 

Problem~\eqref{eq:LMImin} only checks the Hurwitz stability of one matrix $\overline{A}_z$ and has a constraint on $\overline{\delta}_{\text{max}}$. We find that by lower bounding the smallest eigenvalue of every $Q_j$, the Hurwitz stability of $\overline{A}_z$ ensures the locally robust stability. This upper bound can be ensured by upper bounding the largest possible parameter uncertainty $\overline{\delta}_{\text{max}}$. The price we pay for such a complexity reduction is the increased conservativeness coming from the difference between the original lower bound and the uniform constraints. In other words, there can be locally robustly stable microgrids that satisfy the conditions in Lemma~\ref{lem:2nLMI}, but fail to satisfy the condition in Lemma~\ref{lem:cvx}. The conservativeness can be reduced when we directly focus on the smallest eigenvalue of each matrix $Q_j$ in the program by introducing constraints such as $-\gamma I_{4n}\prec Q_j$, $j=1,\cdots,2^n$. Nevertheless, this again increases the computational complexity by adding another $2^n$ constraints. We next show that this can be accomplished by adding only a polynomial number of constraints.

To simplify notation, we define a matrix $G_k$ as follows,
\begin{equation}
G_k=PD_{3n+k}+D_{3n+k}P.\nonumber
\end{equation}

\begin{prop}\label{prop:lescsv}
System~\eqref{eq:gencls} is locally robustly stable if $\exists P=P^T\succ 0$, $\gamma_0,\cdots,\gamma_{n}>0$, $\eta_1,\cdots,\eta_n>0$, such that the following optimization problem is feasible, 
\begin{align}
&\min_{P,\gamma_0,\cdots,\gamma_n,\eta_1,\cdots,\eta_n} -\sum_{k=0}^n\gamma_k \label{eq:LMImin2}\\
\textnormal{subj. to: }& P\overline{A}_z+\overline{A}_z^TP\preceq -\gamma_0 I_{4n}, \quad \sum_{k=1}^{n}\eta_k=\gamma_0, \nonumber \\
&  \overline{\delta}_{k}\cdot G_k\succeq (\gamma_k-\eta_k)I_{4n},  \quad k=1,\cdots,n,\nonumber\\
& \gamma_k>0, \eta_k>0, \quad k=1,\cdots,n,\nonumber\\
&  P=P^T,P\succ 0,\gamma_0>0.\nonumber
\end{align}
\end{prop}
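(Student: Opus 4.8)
The plan is to show that any $P=P^T\succ0$ feasible for problem~\eqref{eq:LMImin2} is simultaneously a quadratic Lyapunov matrix for all $2^n$ vertices $A_j^v$ of $\mathcal{A}_z$; local robust stability then follows at once from Lemma~\ref{lem:2nLMI}. Fix a feasible tuple $(P,\gamma_0,\dots,\gamma_n,\eta_1,\dots,\eta_n)$. For a vertex, each uncertain parameter satisfies $\delta_k\in\{\underline{\delta_k},\overline{\delta}_k\}$, so, writing $\Delta\delta_k:=\overline{\delta}_k-\delta_k$, we have $\Delta\delta_k\in\{0,\overline{\delta}_k-\underline{\delta_k}\}\subseteq[0,\overline{\delta}_k]$, the inclusion using $\underline{\delta_k}\geq0$. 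Since $A_j^v=\overline{A}_z-\sum_{k=1}^n\Delta\delta_k D_{3n+k}$ and each $D_{3n+k}$ is symmetric, expanding gives
\begin{equation}
PA_j^v+(A_j^v)^TP=\bigl(P\overline{A}_z+\overline{A}_z^TP\bigr)-\sum_{k=1}^n\Delta\delta_k\,G_k\preceq-\gamma_0 I_{4n}-\sum_{k=1}^n\Delta\delta_k\,G_k,\nonumber
\end{equation}
where the last step uses the constraint $P\overline{A}_z+\overline{A}_z^TP\preceq-\gamma_0 I_{4n}$ of~\eqref{eq:LMImin2}. It therefore suffices to prove $\sum_{k=1}^n\Delta\delta_k G_k\succ-\gamma_0 I_{4n}$.

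I would establish this lower bound pointwise on quadratic forms. Let $x\in\mathbb{R}^{4n}$ with $\|x\|=1$. Because $0\leq\Delta\delta_k\leq\overline{\delta}_k$, the scalar $\Delta\delta_k(x^TG_kx)$ is minimized over this range at $\Delta\delta_k=0$ when $x^TG_kx\geq0$ and at $\Delta\delta_k=\overline{\delta}_k$ when $x^TG_kx<0$, so $\Delta\delta_k(x^TG_kx)\geq\min\{0,\;x^T(\overline{\delta}_kG_k)x\}$. The constraint $\overline{\delta}_kG_k\succeq(\gamma_k-\eta_k)I_{4n}$ and monotonicity of $\min\{0,\cdot\}$ then give $\Delta\delta_k(x^TG_kx)\geq\min\{0,\gamma_k-\eta_k\}=\min\{\gamma_k,\eta_k\}-\eta_k$. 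Summing over $k$ and invoking $\sum_{k=1}^n\eta_k=\gamma_0$,
\begin{equation}
x^T\Bigl(\sum_{k=1}^n\Delta\delta_k G_k\Bigr)x\;\geq\;\sum_{k=1}^n\min\{\gamma_k,\eta_k\}-\gamma_0\;>\;-\gamma_0,\nonumber
\end{equation}
strictly, since $\gamma_k,\eta_k>0$ and $n\geq1$. As this bound is uniform in $x$, it yields $\sum_{k=1}^n\Delta\delta_k G_k\succeq\bigl(\sum_{k=1}^n\min\{\gamma_k,\eta_k\}-\gamma_0\bigr)I_{4n}\succ-\gamma_0 I_{4n}$.

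Substituting back, $PA_j^v+(A_j^v)^TP\preceq-\bigl(\sum_{k=1}^n\min\{\gamma_k,\eta_k\}\bigr)I_{4n}\prec0$ for every $j=1,\dots,2^n$; hence~\eqref{eq:2n} holds with this $P$, and Lemma~\ref{lem:2nLMI} gives that system~\eqref{eq:gencls} is locally robustly stable. The step I expect to need the most care is the quadratic-form case analysis: one must keep the estimate strict and, crucially, uniform in $x$ so that passing from the scalar inequality to the Loewner-order statement $\sum_k\Delta\delta_k G_k\succ-\gamma_0 I_{4n}$ is legitimate; one should also dispatch the degenerate case $\overline{\delta}_k=0$, where $\Delta\delta_k\equiv0$ and that term simply disappears. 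Everything else is elementary algebra. Note, finally, that only $\Delta\delta_k\in[0,\overline{\delta}_k]$ was used, so the same computation in fact shows $PA_z+A_z^TP\prec0$ for every $A_z\in\mathcal{A}_z$, not merely at the vertices.
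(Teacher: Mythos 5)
Your proof is correct and follows essentially the same route as the paper's: decompose $PA_j^v+(A_j^v)^TP=P\overline{A}_z+\overline{A}_z^TP-\sum_k\Delta\delta_k G_k$ and lower-bound the smallest eigenvalue of $\sum_k\Delta\delta_k G_k$ by $-\gamma_0$ using the per-$k$ constraints and the budget $\sum_k\eta_k=\gamma_0$. The only cosmetic differences are that you conclude directly via Lemma~\ref{lem:2nLMI} rather than routing through the proof of Lemma~\ref{lem:cvx}, and you establish the eigenvalue bound by a pointwise quadratic-form estimate instead of citing Weyl's inequality, which amounts to the same calculation.
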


\begin{proof}
We assume that problem~\eqref{eq:LMImin2} is feasible. Let $P=P^T\succ 0$, $\gamma_0,\cdots,\gamma_n>0$, and $\eta_1,\cdots,\eta_n>0$ be the solution of the problem. From Lemma~\ref{lem:cvx}, to prove that system~\eqref{eq:gencls} is locally robustly stable, we only need to show that the smallest eigenvalue of $Q_j$ is greater than $\gamma_0$. 

The matrix $Q_j$ can be expressed in terms of $G_k$ and $\Delta \delta_k$ as follows,
\begin{equation}
Q_j\!=\!\!P\!\sum_{k=1}^n \!\Delta \delta_k D_{3n\!+\!k}\!+(\!\sum_{k=1}^n\! \Delta \delta_k D_{3n+k})^T\!P\!\!=\!\!\sum_{k=1}^n\!\!\Delta \delta_k G_k.\label{eq:propQj}
\end{equation}
Since $Q_j$ is symmetric, its smallest eigenvalue is given by $\min_{y^Ty=1}y^TQ_jy$. Substituting~\eqref{eq:propQj} into $\min_{y^Ty=1}y^TQ_jy$ yields,
\begin{equation}
\min_{y^Ty=1}y^TQ_jy=\min_{y^Ty=1}y^T\left(\sum_{k=1}^n\!\Delta \delta_k G_k\right)y.\nonumber
\end{equation}

By Weyl's inequality,
\begin{align}
\min_{y^Ty=1}y^TQ_jy\geq \sum_{k=1}^n \Delta \delta_k \!\!\min_{y^Ty=1}y^T G_k y
\nonumber.
\end{align}
From the inequality $\overline{\delta}_{k}\cdot G_k\succeq (\gamma_k-\eta_k)I_{4n}$, the smallest eigenvalue of $G_k$ is lower bounded, $\min_{y^Ty=1}y^T G_k y\geq \Delta \delta_k(\gamma_k-\eta_k)/\overline{\delta}_k$. In addition, seeing that $\gamma_k>0$, and $0\leq \Delta \delta_k/\overline{\delta}_k\leq 1$,
\begin{equation}
\min_{y^Ty=1}y^TQ_jy>-\sum_{k=1}^n \frac{\Delta \delta_k}{\overline{\delta}_{k}}\eta_k\geq -\sum_{k=1}^n \eta_k=-\gamma_0,\nonumber
\end{equation}
thus, the smallest eigenvalue of $Q_j$ is greater than $-\gamma_0$, and this completes the proof.
\end{proof}

Proposition 1 divides the smallest eigenvalue of $Q_j$ into $n$ portions and constrains each of them. It directly uses the smallest eigenvalues of the matrices, which can potentially reduce the conservativeness. In addition, the number of constraints in convex problem~\eqref{eq:LMImin2} grows polynomially with respect to the number of buses, $n$. This makes the condition numerically more tractable than methods like Lemma~\ref{lem:2nLMI}. Intuitively, we focus on each of the $n$ CPLs instead of just bounding the largest parameter uncertainty difference. When the bound on every CPL is met, the locally robust stability is ensured. It exempts the need for checking exponentially many constraints, and avoid the potential conservativeness induced by Lemma~\ref{lem:cvx}. Due to its advantages in terms of computational efficiency and conservativeness, it has better applicability for practical DC microgrid applications.  
\subsection{Discussions of the Proposed Framework}\label{sec:robdiscus}
The locally robust stability framework provides a set of sufficient conditions to determine the locally robust stability of system~\eqref{eq:gencls}. In this subsection, we show two insights into the results. 
\subsubsection{Critical Case Interpretation}
To solve problem~\eqref{eq:LMImin} or problem~\eqref{eq:LMImin2}, we only need to know the matrix $\overline{A}_z$ a priori. By definition, the matrix $\overline{A}_z$ can be recognized as the linearized system matrix of the DC microgrid in a case where each CPL's power reaches the upper bound and each CPL's steady state voltage reaches the lower bound. With given constraint sets $\mathcal{P}$ and $\mathcal{V}_l^e$, $\overline{A}_z$ is deterministic and is simple to obtain. We call this case the {\em critical case} of the DC microgrid with constraint sets $\mathcal{P}$ and $\mathcal{V}_l^e$. It is convenient to interpret the locally robust stability framework as checking some properties of the DC microgrid in the critical case (e.g., the Hurwitz stability of the linearized system matrix) to guarantee the stability of the DC microgrid in all possible scenarios. 

Since the critical case depends on the constraint sets, if the sets are modified the corresponding critical cases will differ. Suppose the constraint sets are changed into $\hat{\mathcal{P}}$ and $\hat{\mathcal{V}}_l^e$, respectively, we study the equilibria of system~\eqref{eq:gencls} that lie inside an operationally feasible set $\hat{\mathcal{X}}_e(\hat{\mathcal{P}},\hat{\mathcal{V}}_l^e)$. Let the linearized system matrix of the critical case after the change be given by 
\begin{equation}
\hat{A}_z=A+\sum_{k=1}^n \hat{\delta}_kD_{3n+k},\nonumber
\end{equation}
where $\hat{\delta}_k$ is the new upper bound of each $\delta_k$ with respect to the modified constraint sets $\hat{\mathcal{P}}$ and $\hat{\mathcal{V}}_l^e$. It is worth mentioning that the system parameters (e.g., CPL capacitance $C_{lk}$) and topology will not change with the two sets, i.e. the matrix $A$ does not depends on $\hat{\mathcal{P}}$ and $\hat{\mathcal{V}}_l^e$.

Recall the definition of locally robust stability, we call system~\eqref{eq:gencls} locally robustly stable with $\hat{\mathcal{X}}_e(\hat{\mathcal{P}},\hat{\mathcal{V}}_l^e)$ if any $x_e\in \hat{\mathcal{X}}_e(\hat{\mathcal{P}},\hat{\mathcal{V}}_l^e)$ is locally exponentially stable. Since solving problem~\eqref{eq:LMImin} or problem~\eqref{eq:LMImin2} only requires to know the linearized system matrix of the critical case, to simplify discussion we use $\textbf{P}_1(\overline{A}_z)$ and $\textbf{P}_2(\overline{A}_z)$ to represent problem~\eqref{eq:LMImin} and problem~\eqref{eq:LMImin2}, respectively, when $\overline{A}_z$ is utilized in the problems. 

Notice that the feasibility of $\textbf{P}_l(\overline{A}_z)$ is not a sufficient condition for the feasibility of $\textbf{P}_l(\hat{A}_z)$, $l=1$ or 2, and it does not necessarily guarantee the locally robust stability with $\hat{\mathcal{X}}_e(\hat{\mathcal{P}},\hat{\mathcal{V}}_l^e)$. Therefore, one may need to employ the locally robust stability framework whenever there are changes happened to the constraint sets. Even though the conditions given in the framework can be checked efficiently, for a DC microgrid with constantly changing constraint sets the repetitive computational efforts may still be undesirable.

We next derive a condition to guarantee the locally robust stability with $\hat{\mathcal{X}}_e(\hat{\mathcal{P}},\hat{\mathcal{V}}_l^e)$ of system~\eqref{eq:gencls} without solving $\textbf{P}_l(\hat{A}_z)$, $l=1$ or 2, repeatedly when the constraint sets change.

%

\begin{cor}\label{cor:nocheck}
System~\eqref{eq:gencls} is locally robustly stable with $\hat{\mathcal{X}}_e(\hat{\mathcal{P}},\hat{\mathcal{V}}_l^e)$ if the problem $\textbf{P}_l(\overline{A}_z)$ is feasible, $l=1$ or 2, and $\overline{\delta}_k\geq \hat{\delta}_k$, $\forall k=1,\cdots,n$.
\end{cor}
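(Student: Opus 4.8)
The plan is to use Lemma~\ref{lem:cvx} and Proposition~\ref{prop:lescsv} as black boxes, but applied to a deliberately enlarged constraint set, and then to reduce the modified problem to an inclusion of uncertainty polytopes. The starting observation is that the convex programs $\textbf{P}_1$ and $\textbf{P}_2$ take as input only the critical-case matrix $\overline{A}_z$ (the quantities $\overline{\delta}_k$ and $\overline{\delta}_{\text{max}}$ that appear in them are simply read off the diagonal of $\overline{A}_z$), and $\overline{A}_z$ is insensitive to the \emph{lower} ends of the CPL-power intervals. Concretely, if we replace the original $\mathcal{P}$ by $\mathcal{P}^{0}=\{p:p_k\in[0,\overline{p_k}],\,k=1,\dots,n\}$ and keep $\mathcal{V}_l^e$, the element-wise maximum of the resulting uncertainty set $\mathcal{A}_z^{0}=\{A+\sum_{k=1}^n\delta_kD_{3n+k}:\delta_k\in[0,\overline{\delta}_k]\}$ is still $\overline{A}_z$, with the same $\overline{\delta}_{\text{max}}$. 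Hence the hypothesis ``$\textbf{P}_l(\overline{A}_z)$ is feasible'' is \emph{exactly} the hypothesis of Lemma~\ref{lem:cvx} (or Proposition~\ref{prop:lescsv}) for the microgrid with constraint sets $(\mathcal{P}^{0},\mathcal{V}_l^e)$.

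First I would invoke that lemma/proposition in this enlarged setting to conclude that system~\eqref{eq:gencls} is locally robustly stable with $\mathcal{X}_e(\mathcal{P}^{0},\mathcal{V}_l^e)$, which (by the reasoning just before Lemma~\ref{lem:2nLMI}) means every matrix in $\mathcal{A}_z^{0}$ is Hurwitz stable; in fact the feasible $P=P^T\succ 0$ furnishes a common Lyapunov function $z^TPz$ for all of $\mathcal{A}_z^{0}$. Second, I would set up the modified problem: linearizing~\eqref{eq:gencls} at any $x_e\in\hat{\mathcal{X}}_e(\hat{\mathcal{P}},\hat{\mathcal{V}}_l^e)$ gives a matrix $A+\sum_{k=1}^n\delta_kD_{3n+k}$ with $\delta_k\in[\underline{\hat{\delta}_k},\hat{\delta}_k]$ (the $A$ and $D_{3n+k}$ are untouched because the parameters and topology do not depend on the constraint sets), and I would show $\hat{\mathcal{A}}_z\subseteq\mathcal{A}_z^{0}$: the hypothesis $\overline{\delta}_k\geq\hat{\delta}_k$ gives the upper end of each interval, and $\underline{\hat{\delta}_k}\geq 0$ (because $\underline{\hat{p}_k}\geq 0$ and $C_{lk}>0$) gives the lower end. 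Combining the two, every matrix in $\hat{\mathcal{A}}_z$ is Hurwitz stable with the same common Lyapunov function, so by Lyapunov's indirect method every $x_e\in\hat{\mathcal{X}}_e(\hat{\mathcal{P}},\hat{\mathcal{V}}_l^e)$ is locally exponentially stable; that is, system~\eqref{eq:gencls} is locally robustly stable with $\hat{\mathcal{X}}_e(\hat{\mathcal{P}},\hat{\mathcal{V}}_l^e)$.

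The main obstacle — and the reason for the detour through $\mathcal{P}^{0}$ — is that one cannot simply quote ``locally robust stability with $\mathcal{X}_e(\mathcal{P},\mathcal{V}_l^e)$'' and be done: $\hat{\mathcal{A}}_z$ is in general \emph{not} contained in the original $\mathcal{A}_z$, because the hypothesis controls only the upper ends $\hat{\delta}_k\leq\overline{\delta}_k$ and says nothing about how $\underline{\hat{\delta}_k}$ compares with $\underline{\delta_k}$. The point to get right is therefore the claim that feasibility of $\textbf{P}_l(\overline{A}_z)$ already certifies Hurwitz stability of the whole box $\mathcal{A}_z^{0}\supseteq\mathcal{A}_z\cup\hat{\mathcal{A}}_z$; this is immediate once one notices that $\overline{A}_z$, and hence the entire convex program, does not depend on the lower bounds $\underline{p_k}$, so Lemma~\ref{lem:cvx}/Proposition~\ref{prop:lescsv} may be applied verbatim with $\underline{p_k}$ relaxed to $0$. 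Equivalently, re-reading their proofs shows that $0\leq\overline{\delta}_k-\delta_k\leq\overline{\delta}_k$, i.e. $\delta_k\in[0,\overline{\delta}_k]$, is the only property of the uncertainty interval that they use.
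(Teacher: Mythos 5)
Your proposal is correct and rests on the same key observation as the paper's own proof, namely that the arguments of Lemma~\ref{lem:cvx} and Proposition~\ref{prop:lescsv} use only $0\leq\Delta\delta_k=\overline{\delta}_k-\delta_k\leq\overline{\delta}_k$ and therefore certify Hurwitz stability over the whole box $\delta_k\in[0,\overline{\delta}_k]$, which contains $\hat{\mathcal{A}}_z$ whenever $\hat{\delta}_k\leq\overline{\delta}_k$. The paper reaches the same conclusion via a coordinate-wise case split ($\delta_k\in[\underline{\delta_k},\hat{\delta}_k]$ versus $\delta_k\in[0,\underline{\delta_k})$) followed by an appeal to ``similar arguments''; your single-inclusion formulation $\hat{\mathcal{A}}_z\subseteq\mathcal{A}_z^{0}$ is a slightly cleaner packaging of the identical idea and avoids the issue that a general matrix in $\hat{\mathcal{A}}_z$ can mix the paper's two cases across coordinates.
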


\begin{proof}
For the DC microgrid after the change, its linearized system matrix $A_z$ lies inside a set $\hat{\mathcal{A}}_z$ given as follows,
\begin{equation}
\hat{\mathcal{A}}_z\!=\!\left\{ \!\!A_z\!\!\in\!\!\mathbb{R}^{4n\times 4n}\!:\! A_z\!\!=\!\!A\!\!+\!\!\sum_{k=1}^n \delta_kD_{3n+k},\delta_k\in [0,\hat{\delta}_k]\!\!\right\}.\nonumber
\end{equation} 
Notice that although $\overline{\delta}_k\geq \hat{\delta}_k$ the set $\hat{\mathcal{A}}_z$ does not necessarily belong to $\mathcal{A}_z$ given in~\eqref{eq:Az}, because of the difference in the lower bound of $\delta_k$.

If $\delta_k\in [\underline{\delta_k},\hat{\delta}_k]$, $A_z\in \mathcal{A}_z$. Since problem $\textbf{P}_l(\overline{A}_z)$ is feasible, $l=1$ or 2, $A_z$ is Hurwitz stable.

If $\delta_k\in [0,\underline{\delta_k})$, $A_z$ can be expressed in the similar form of~\eqref{eq:cvxdelA} as follows,
\begin{equation}
A_z=\overline{A}_z-\sum_{k=1}^n \Delta \delta_kD_{3n+k},\nonumber
\end{equation}
where $\Delta \delta_k=\overline{\delta}_k-\delta_k$. By utilizing the similar arguments for the proof of Lemma~\ref{lem:cvx} and Proposition~\ref{prop:lescsv}, $A_z$ can be shown to be Hurwitz stable as well when $\delta_k\in [0,\underline{\delta_k})$. This completes the proof.
\end{proof}

%
%
%
%

Corollary~\ref{cor:nocheck} presents a condition that reduces the repetitive efforts of checking the robust stability framework. When $\textbf{P}_l(\overline{A}_z)$ is feasible for $l=$1 or 2, the key to make the condition satisfied is to guarantee the upper bound for each $\delta_k$ not increased. Furthermore, it implies that when $\overline{\delta}_k$ is smaller the conditions proposed in the paper is more easily to be satisfied, and the DC microgrid is more likely to be locally robustly stable. Recall that $\overline{\delta}_k=\overline{p}_k/(C_{lk}\cdot \underline{v_{lk}^e})^2$. One method is to increase the load capacitance, $C_{lk}$. This provides one explanation for the observations that larger capacitor increases the stability region of DC microgrid~\cite{7182770}. Another method is to decrease the CPL power upper bound, $\overline{p}_k$. In addition, even if $\overline{p}_k$ is increased, by enabling larger CPL voltage lower bound, $\underline{v_{lk}^e}$, for the DC microgrid, the condition can still be satisfied. This gives rise to another DC microgrid design insight such that a higher level of the steady state load voltage is helpful for the stable operation of DC microgrids. It is in line with the common intuition that a higher level of voltage in the DC microgrid is helpful for stability~\cite{7108029}. Notice that the condition given in Corollary~\ref{cor:nocheck} does not impose any additional constraint on the lower bound of $\delta_k$. Recall that $\underline{\delta_k}=C_{lk}\cdot \underline{p_k}/(\overline{v}_{lk}^e)^2$, the result implies that the values of $\underline{p_k}$ and $\overline{v}_{lk}^e$ have less influences on the locally robust stability of the DC microgrid.

\subsubsection{Application to Distributed Control}\label{sec:robdifcon}
Recently, some distributed controllers have been proposed to replace the V-I droop controllers for DC microgrids~\cite{6816073,6287047}. The modeling method presented in this paper and the corresponding stability results can be easily extended to DC microgrids with these distributed controllers. In the following we use an example to demonstrate this.

Similar to that of~\cite{6816073}, we consider a distributed control law for voltage regulation. We adopt a communication network like~\cite{SimpsonPorco20132603}, which enables each source to communicate with the sources on the neighboring buses. The goal of the voltage regulation is to let DC bus voltage $v_{bk}$ track a reference $v_{bk}^*$. To accomplish this goal, the source voltage $v_k$ is controlled to decrease the sum of the tracking errors in its neighborhood. Let $v_b=[v_{b1},\cdots,v_{bn}]^T$, $v^*_b=[v_{b1}^*,\cdots,v_{bn}^*]^T$, $v=[v_1,\cdots,v_n]^T$, and $W$ be the adjacency matrix of the graph $\mathcal{G}$. With the distributed control law, the dynamics of the sources' output voltage are given as follows,
\begin{align}
&\dot{v}_k=g_k\sum_{j=1}^{\mathcal{N}_k} \left(v_{bj}^*-v_{bj}(t)\right),\nonumber\\ &\dot{v}=\sum_{k=1}^n g_kD_{k} W(v_b^*-v_b),\nonumber
\end{align}
where $g_k>0$ is the control gain, and the dynamics of $v_k$ are determined by a weighted sum of the tracking errors in the neighborhood. Regarding the vector equation, the $k^{\text{th}}$ entry of $W(v_b^*-v_b)$ equals $\sum_{j=1}^{\mathcal{N}_k} (v_{bj}^*-v_{bj}(t))$. With the distributed control law, $v$ becomes a control state, and every equilibrium of $v$ makes sure that the weighted sum of the tracking errors to be zero. Seeing that the goal of voltage regulation is accomplished when the equilibrium is stable, the locally robust stability analysis is of significance to study.

The main results of the paper can be applied for the analysis. The state variable of the closed loop system is augmented by the control state $v$. Let $\zeta=[v^T,x^T]^T$ be the augmented state variable. The dynamics of $v$ are linear in $\zeta$, and the dynamics of $x$ has a linear term in $\zeta$ and a nonlinear term concerning the CPLs as well. Thus, the dynamics of $\zeta$ can be characterized by a system similar to system~\eqref{eq:gencls}. The problem can also be formulated as a stability analysis problem of a polytopic uncertain linear system with uncertainties lying on the diagonal entries of the system matrix. We can apply the proposed robust stability framework for this problem.

\section{Case Study}\label{sec:case}
%
\subsection{Simulation Validation}

In this subsection, we demonstrate the effectiveness of the robust stability framework proposed in Section~\ref{sec:rob}. 

The case study is based on a nine-bus DC microgrid shown in Fig.~\ref{fig:9s9l}. Each source is under the V-I droop control, and the output voltage is given by $v_k=v^{\text{ref}}_k-d_ki_{sk}$, $k=1,\cdots,9$. The parameters of the $k^{\text{th}}$ bus are shown in TABLE~\ref{table:spec_2}. Suppose that each load power is unknown and may take values in the range $[5\text{kW},20\text{kW}]$. Let $p^n_k$ be the nominal power of the $k^{\text{th}}$ CPL, and it is given by $p^n_k=15$kW. In addition, the operationally feasible range of each steady state CPL voltage $v^e_{lk}$ is set as $[360\text{V},440\text{V}]$. The simulation studies are conducted on a laptop with Intel Core i7. The model of the DC microgrid is built and simulated by MATLAB/Simulink, and the convex problems given in the proposed robust stability framework are solved by MATLAB convex optimization toolbox CVX~\cite{cvx,gb08}.  Notice that any other simulation and optimization tools can be used for the case studies.

\begin{figure}[h]
\centering
\includegraphics[width=0.45\textwidth]{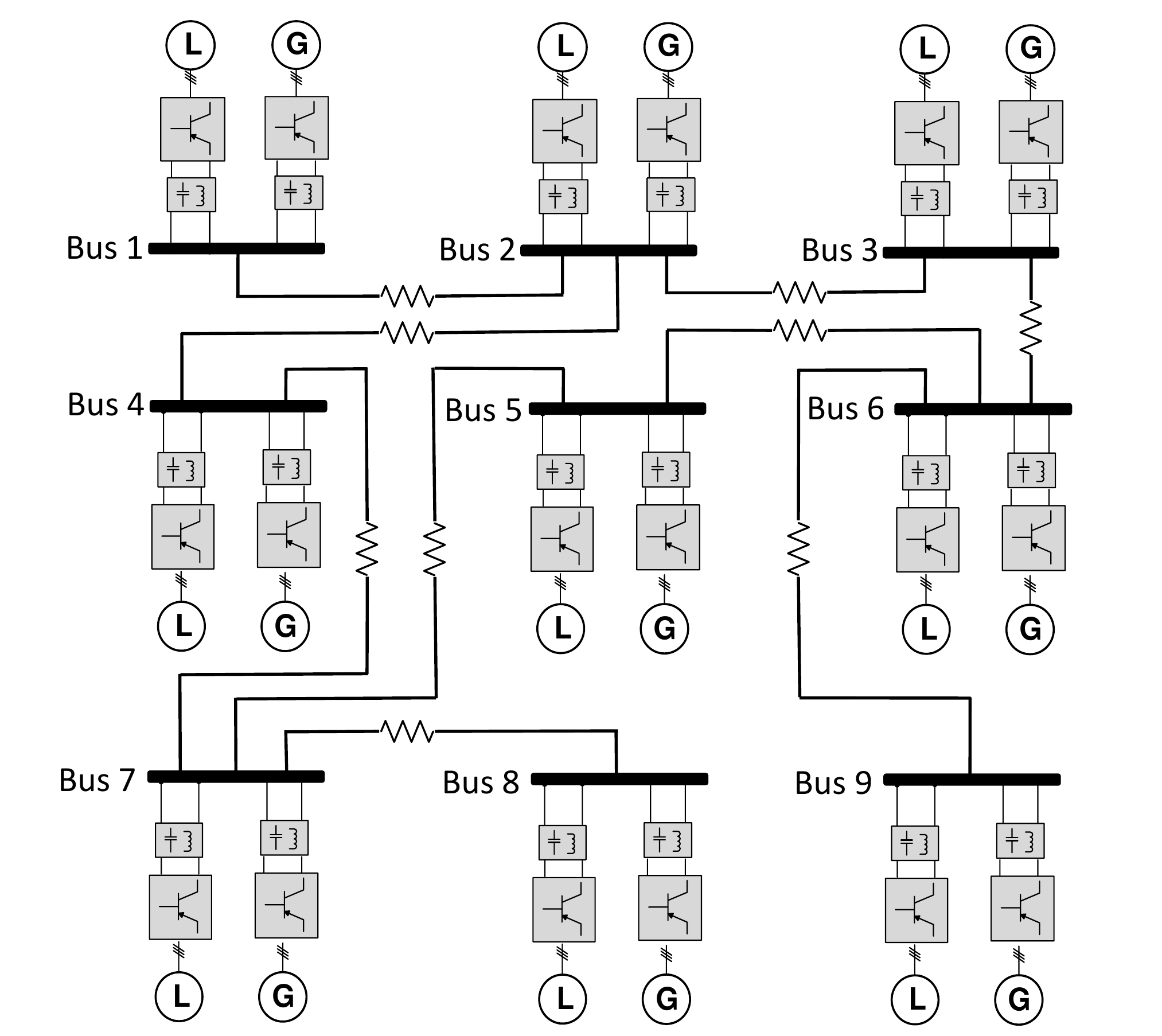}
\caption{\label{fig:9s9l}The nine-bus DC microgrid}
\end{figure}

\begin{table}[h]
\caption{Specifications of the five-bus DC microgrid}
\label{table:spec_2}
\begin{center}
\begin{tabular}{cccc}
\hline\hline
$r_{sk}$ & 0.05 $\Omega$ & $r_{lk}$ & 0.05 $\Omega$ \\ 
$L_{sk}$ & 0.9 mH & $L_{lk}$ & 0.9 mH \\ 
$C_{bk}$ & 0.75 mF & $C_{lk}$ & 0.7 mF \\   
$r_{jk}$ & 1 $\Omega$ & $p^n_k$ & 15 kW\\
$\underline{p_k}$ & 5 kW & $\overline{p}_k$ & 20 kW \\
$\underline{v^e_{lk}}$ & 360 V & $\overline{v^e_{lk}}$ & 440 V \\ 
$v^{\text{ref}}_k$ & 400V\\
\hline\hline
\end{tabular}
\end{center}
\end{table}

With given constrains on the power and steady state voltage of the CPLs, we study the stability of the DC microgrid's equilibria that lie in an operationally feasible set as discussed in Section~\ref{sec:robprob}. Existing works only consider the stability of one given equilibrium which requires the full knowledge of the CPL power profiles~\cite{3248412,6909049,7182770,6031929}. Furthermore, their results cannot guarantee the stability of all the possible equilibria in the feasible set. We demonstrate this by using a simulation example. In this example, we apply existing works to the DC microgrid with nominal CPL power. For the resulting deterministic system, all its equilibria can be found. However, with nine CPLs in the microgrid there are generally 512 equilibria. Sorting out the equilibrium that the DC microgrid operates at is difficult since there are no general results for this determination~\cite{7108029}. By using numerical methods~\cite{6909049,7182770}, we find that the DC microgrid will operate at a stable equilibrium when each CPL power equals $15$kW. However, when load power is different from 15kW other equilibria in the feasible set may still be unstable. Suppose that all the CPL power is the same and let each CPL power increase from 5kW to 20kW. The simulation results of the load power and the DC bus voltage of bus 1 are shown in Fig.~\ref{fig:newcase_uns}. From the figure, even though the DC microgrid is stable when the CPL power is 15kW, the microgrid becomes unstable when the load power increases to around 16kW. 

\begin{figure}[h]
\centering
\includegraphics[width=0.4\textwidth]{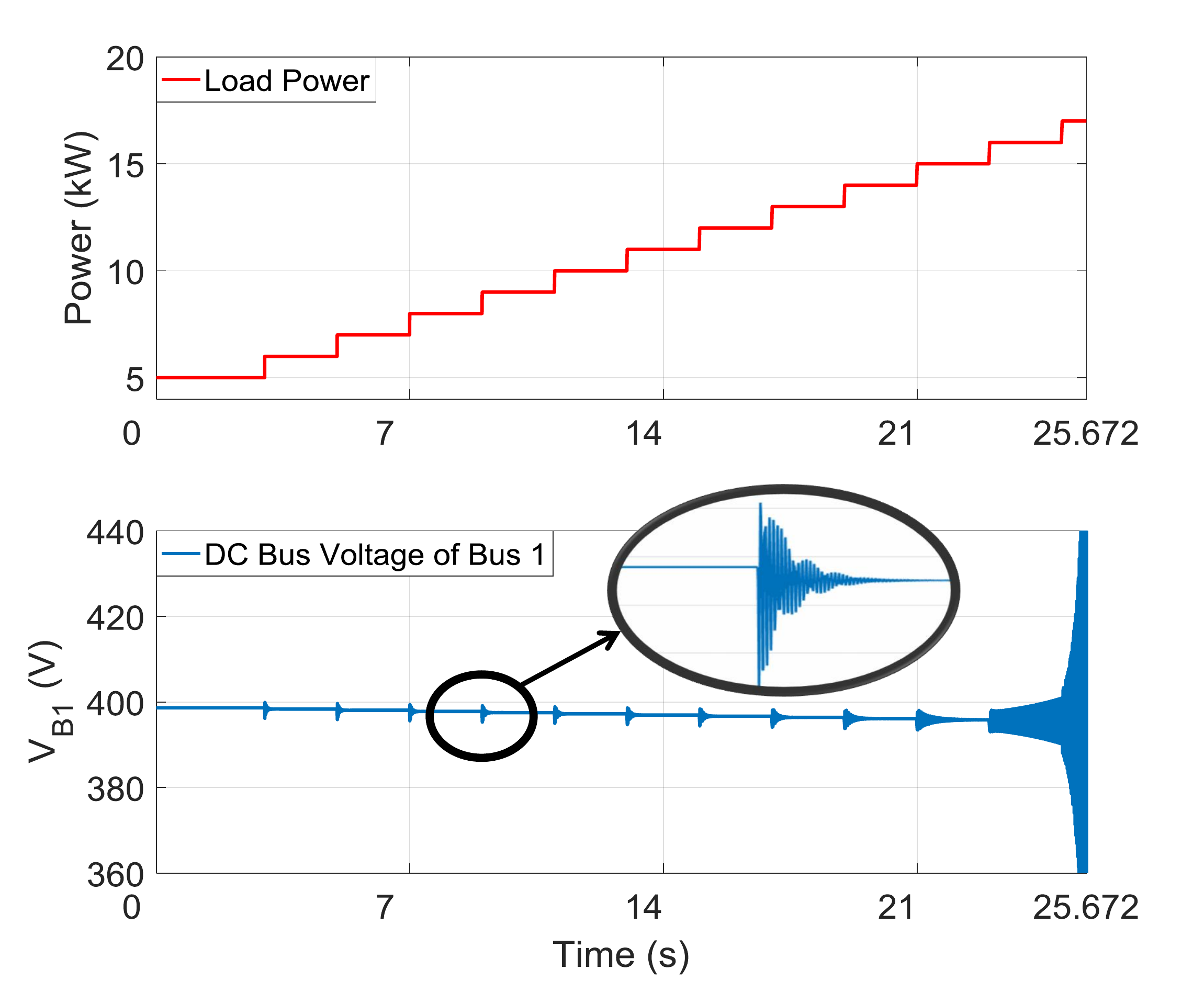}
\caption{\label{fig:newcase_uns}The load power and the DC bus voltage of bus 1 when the droop gains are 0.06}
\end{figure}

\begin{figure}[h]
\centering
\includegraphics[width=0.4\textwidth]{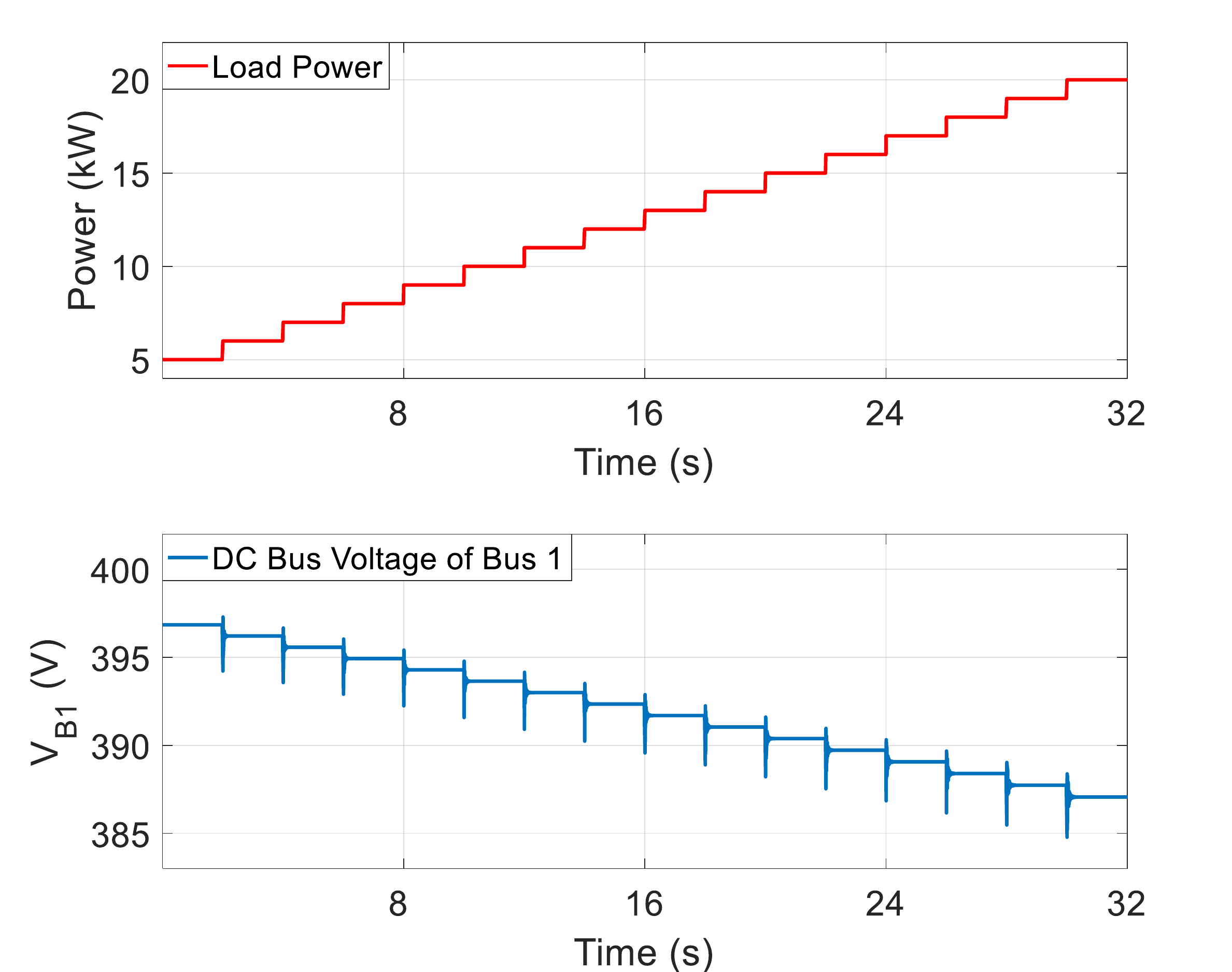}
\caption{\label{fig:newcase_stb}The load power and the DC bus voltage of bus 1 when the droop gains are 0.2}
\end{figure}

In fact, the DC microgrid may operate at every equilibrium in the feasible set. There needs a numerically efficient method to determine the stability of all the operationally feasible equilibria, which gives rise to the application of the proposed robust stability framework. From the results given in Section~\ref{sec:robresult}, the feasibility of either problem~\eqref{eq:LMImin} or problem~\eqref{eq:LMImin2} guarantees the robust stability of the DC microgrid. By using the convex optimization toolbox CVX, neither of problems~\eqref{eq:LMImin} and~\eqref{eq:LMImin2} is feasible when the droop gains are 0.1. This infeasibility happens because not all the equilibria in the feasible set are stable when droop gains are 0.06. However, when the droop gains are set as 0.2, problem~\eqref{eq:LMImin2} is feasible, indicating the DC microgrid will be robustly stable with the increased droop gains. This is verified by the simulation results shown in Fig.~\ref{fig:newcase_stb}. From the figure, the DC microgrid remains stable when the load power varies in the interval [5kW,20kW]. The results demonstrate the effectiveness of the proposed robust stability framework. 

It is worth mentioning that the CVX spends 13.48s to solve the problem.

\subsection{Study on Conservativeness}
In this subsection, we study the conservativeness of the proposed robust analysis framework. We quantify and compare the conservativeness of Lemmas~\ref{lem:2nLMI},~\ref{lem:cvx}, and Proposition~\ref{prop:lescsv}. In addition, we discuss the importance of system parameters to robust stability and the relationship of CPL power with voltage levels. 

Although quantification of the conservativeness of optimization problems are often times difficult, we provide one approximated quantifying method to help clarify the applicability of our work. Recall that the feasibility of problems~\eqref{eq:2n},~\eqref{eq:LMImin}, or~\eqref{eq:LMImin2} in the conditions provides stability certificate to a certain collection of DC microgrids. A condition is said to be more conservative than another when there exists DC microgrid that only the latter can certify. From Section III-C, with the increase of $\overline{\delta}_k$, it becomes more difficult for a condition to hold. The upper bound of $\overline{\delta}_k$ that keeps a condition hold can be used as a quantification of conservativeness. Let $\overline{\delta}_l^u$, $l=1,2,3$, be this upper bound for Lemmas~\ref{lem:2nLMI},~\ref{lem:cvx}, and Proposition~\ref{prop:lescsv}, respectively. It is clear that if $\overline{\delta}_3^u>\overline{\delta}_2^u$, Proposition~\ref{prop:lescsv} not only can certify all the DC microgrid certifiable to Lemma 2 but also is able to certify other DC microgrids. Then we can conclude that Proposition 1 is less conservative than Lemma~\ref{lem:cvx}. 


We use line search methods to find $\overline{\delta}^u_l$ of each condition. One other method is to use $\overline{\delta}_k$ as a decision variable. This makes the problems non-convex due to bilinear couplings. On the other hand, we can use line search methods to gradually approach $\overline{\delta}^u_l$. For example, we can let the initial value of each $\overline{\delta}_k$ be 1, and incrementally increase its value by 1 until a problem becomes unfeasible. 

\begin{figure}[h]
\centering
\includegraphics[width=0.45\textwidth]{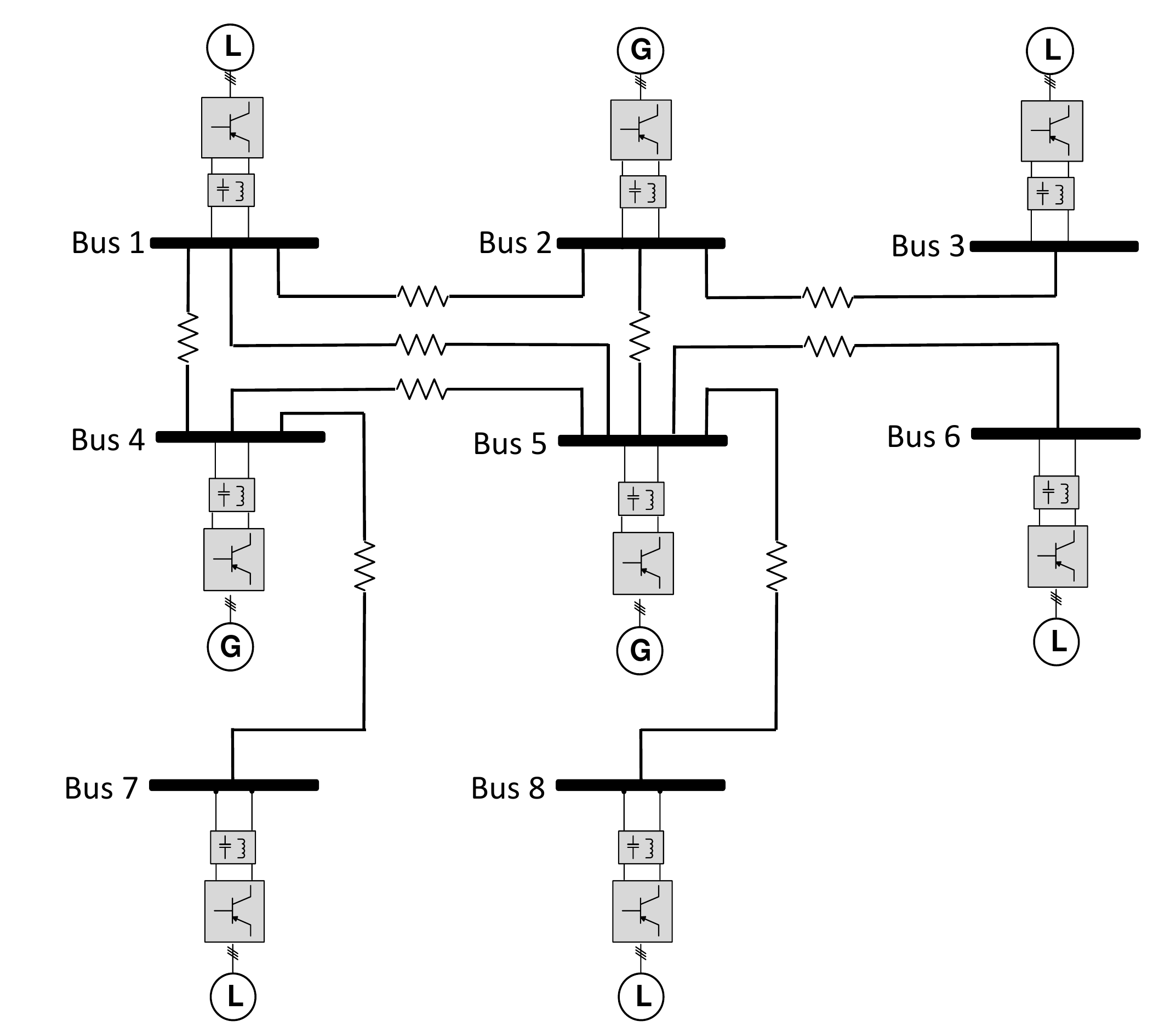}
\caption{\label{fig:8bus}The eight-bus DC microgrid}
\end{figure}

We apply the above described method to an eight-bus DC microgrid obtained from~\cite{Zhao201518}. The topology of the microgrid is shown in Fig.~\ref{fig:8bus}. Notice that there is only one source or load on each bus. Let the droop gains be 0.20 and the system parameters such as inductance, capacitance, and resistance be the same as those in TABLE~\ref{table:spec_2}. The results are given in TABLE~\ref{table:csv_1}. They verify our discussions concerning the conservativeness and computational issues regarding the three conditions in Section~\ref{sec:robresult}. For problem~\eqref{eq:LMImin2}, the results show that it has a good approximation of problem~\eqref{eq:2n}. For problem~\eqref{eq:LMImin}, it is conservative but the least computationally demanding. From the proof in Appendix A, this is because we replace the bound on a decision variable, $\gamma$, to reduce the number of LMIs. For this case study, the replaced bound can be 7 times larger than the previous bound. 

\begin{table}[h]
\caption{Approximated $\overline{\delta}^u_l$ and average computation time for the eight-bus DC microgrid}
\label{table:csv_1}
\begin{center}
\begin{tabular}{cccc}
\hline\hline
& Lemma~\ref{lem:2nLMI} & Lemma~\ref{lem:cvx} & Proposition~\ref{prop:lescsv} \\ 
Approx. $\overline{\delta}^u_l$ & 220 & 51 & 201 \\ 
Ave. Com. Time & 5.3s & 2.6s & 2.8s \\   
\hline\hline
\end{tabular}
\end{center}
\end{table}
\vspace{-0.2cm}
Additionally, one can observe that the quantified conservativeness, $\overline{\delta}^u_l$, depends on the topology and system parameters such as inductance, capacitance, and resistance. By maneuvering the system parameters, the conservativeness of the problems may change subsequently. For example, if we modify the transition resistance to 0.3$\Omega$, then the approximated $\overline{\delta}^u_l$ of problems~\eqref{eq:LMImin2} and~\eqref{eq:2n} become 153 and 170, respectively. This indicates the importance of system parameters to the robust stability of a DC microgrid. 

At last, the quantity $\overline{\delta}^u_l$ also gives practical implications on the relationship of CPL power with voltage levels. Notice that $\overline{\delta}_k$ is a function in the operational bounds of CPL power and voltage, we can find the following inequality,
\begin{equation}
\frac{\overline{p_k}}{C_{lk}(\underline{v^e_{lk}})^2}\leq \overline{\delta}^u_l.\nonumber
\end{equation}
It can be used for practical DC microgrid operations and answers the following two questions: 1. With given maximum CPL power, how should the CPL voltage be regulated to keep stability? 2. With given lowest CPL voltage, how to design the bounds for CPL power for stability?  For example, when the upper bound of CPL power is known, the inequality translates into a minimum certifiable value for the CPL voltage. If the CPL voltage is regulated above this value the microgrid can be ensured to be locally robustly stable. Similarly, we can also find the maximum certifiable CPL power that makes the system stable with given $\underline{v^e_{lk}}$. Regarding the eight-bus DC microgrid with transition resistance being 0.3$\Omega$, $\overline{\delta}^u_1$ of problem~\eqref{eq:2n} is 170. When $\underline{v_{lk}^e}$ is set to be 360V, Lemma~\ref{lem:2nLMI} provides stability certificate for DC microgrids with CPL power less than 15.4kW. To see this, we run simulations with load power increasing from 11kW, the system becomes unstable at around 6s as shown in Fig.~\ref{fig:newcase2_uns}. On the other hand, in order to support 20kW CPL power, the minimum certifiable $\underline{v_{lk}^e}$ is 410V. If we increase the droop references to 440V, from simulation results shown in Fig~\ref{fig:newcase2_stb}, the voltage level of the CPL stays above 410V so that the system can successfully support all the five 20kW CPL. 

\begin{figure}[h]
\centering
\includegraphics[width=0.4\textwidth]{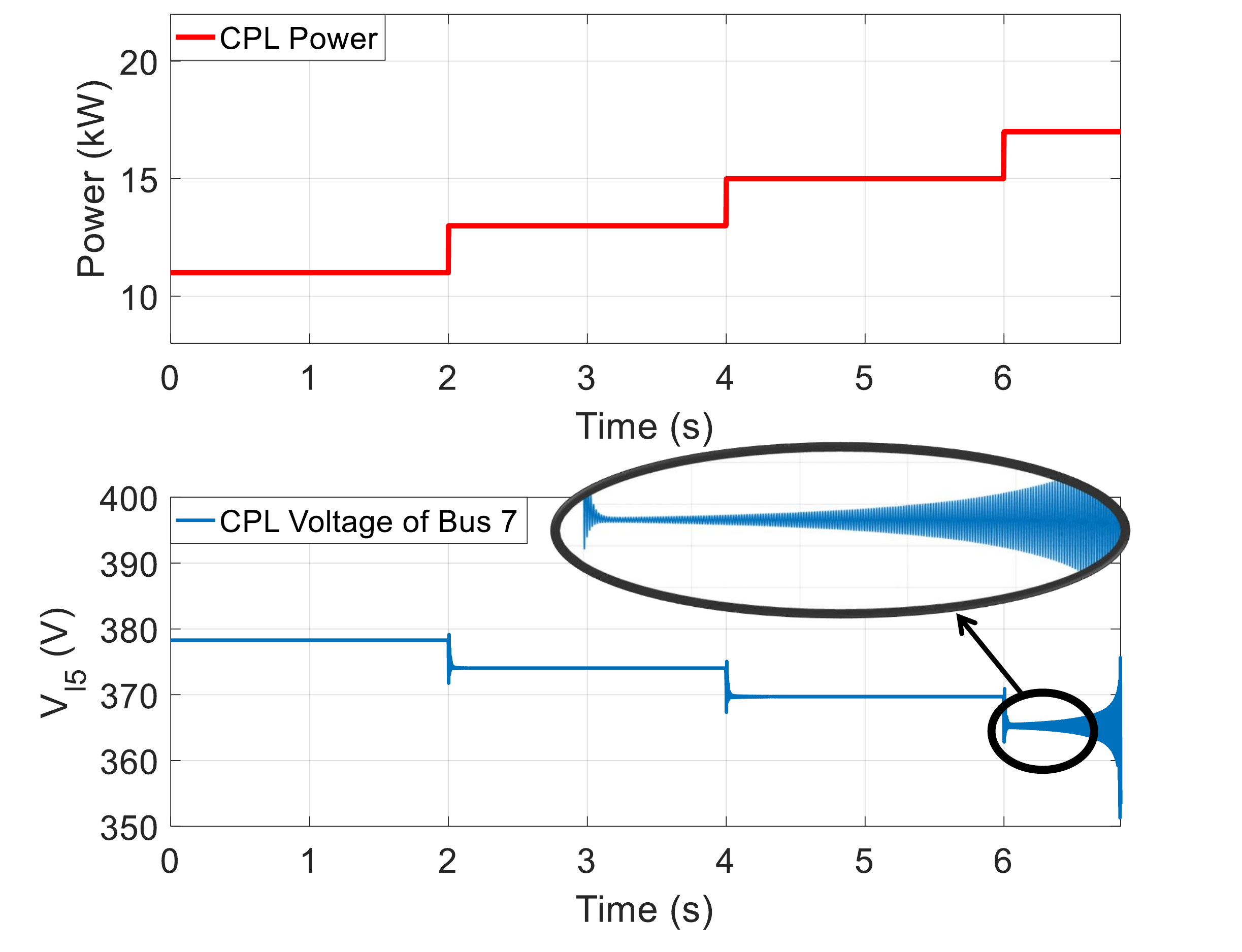}
\caption{\label{fig:newcase2_uns}The load power and the DC bus voltage of bus 7 when the droop references are 400V}
\end{figure}
\begin{figure}[h]
\centering
\includegraphics[width=0.4\textwidth]{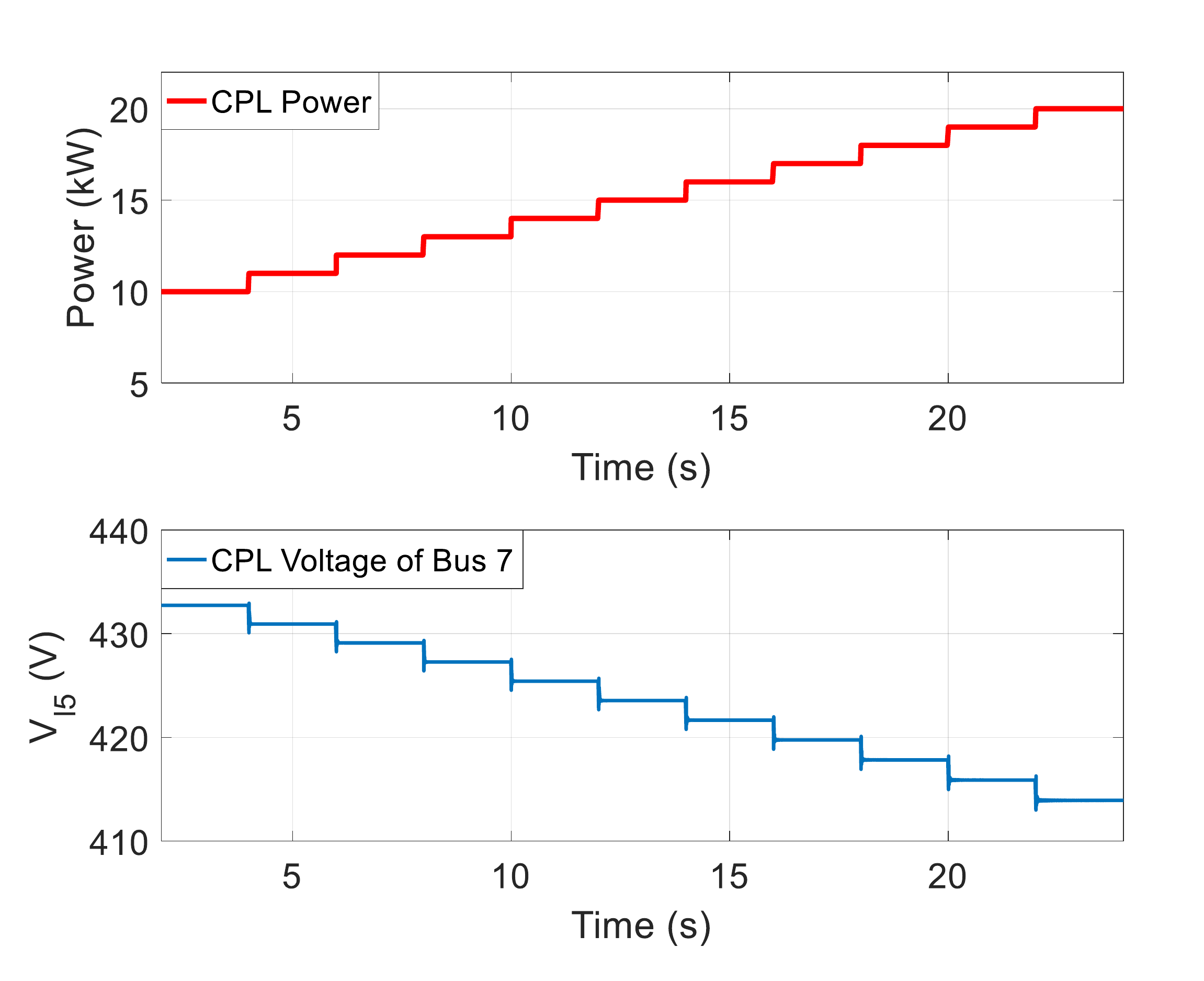}
\caption{\label{fig:newcase2_stb}The load power and the DC bus voltage of bus 7 when the droop references are 440V}
\end{figure}

\section{Conclusion}\label{sec:conclusion}
This paper focuses on stability analysis problem of a general DC microgrid with uncertain CPLs. We study the DC microgrid's equilibria that lie inside an operationally feasible set. When uncertainties are involved, it is difficult to find the actual equilibria that the DC microgrid will converge to. Existing works focus on the stability analysis of a given equilibrium with known CPL power, hence they cannot be applied to this problem. Since the microgrid may operate at any equilibrium in the feasible set, the problem is formulated as a robust stability analysis problem of linear systems with polytopic uncertainties on system matrices. A robust stability framework is proposed in the paper where a set of sufficient conditions are presented. Computationally efficient convex problems are formulated to facilitate the analysis. We use case studies to demonstrate the effectiveness of the results. For future research, we will further study the conservativeness issue in the conditions and the control synthesis for stabilization.
\begin{appendices}
\section{Proof of Lemma 2}
\begin{proof}
We assume that problem~\eqref{eq:LMImin} is feasible. Let $P=P^T\succ 0$, $t>0$, and $\gamma>0$ be the solution of the problem. To prove that system~\eqref{eq:gencls} is locally robustly stable, from Lemma~\ref{lem:2nLMI} it suffices to show that each matrix $PA_j^v+(A^v_j)^TP$ is negative definite, $j=1,\cdots, 2^n$.

Let $\Delta \delta_k=\overline{\delta}_k-\delta_k$ and $\Delta A^v_j=\overline{A}_z-A^v_j$. 

The matrix $A_j^v$ can be expressed in terms of $\overline{A}_z$ and each $\Delta \delta_k$ as follows,
\begin{equation}
A^v_j=\overline{A}_z-\Delta A^v_j=\overline{A}_z-\sum_{k=1}^n \Delta \delta_k D_{3n+k}.\label{eq:cvxdelA}
\end{equation}

Substituting equation~\eqref{eq:cvxdelA} into $PA_j^v+(A^v_j)^TP$ yields,
\begin{equation}
PA_j^v+(A^v_j)^TP=P\overline{A}_z+\overline{A}_z^TP-Q_j.\label{eq:thmtran1}
\end{equation}
Since the matrix $P\overline{A}_z+\overline{A}_z^TP\preceq -\gamma I_{4n}$, if the smallest eigenvalue of $Q_j$ is larger than $-\gamma$, the matrix $PA_j^v+(A^v_j)^TP$ is negative definite. Unfortunately, $Q_j$ is not necessarily positive definite, lower bound on the smallest eigenvalue of $Q_j$ are required and can be provided by upper bounding the largest absolute eigenvalue of $Q_j$ with $\gamma$.

For a matrix $M$, let $||M||$ be the 2 norm of $M$. Recall that if $M$ is symmetric, $||M||$ equals the largest absolute eigenvalue of $M$, and the 2 norm of the diagonal matrix $\sum_{k=1}^n \Delta \delta_k D_{3n+k}$ equals $\overline{\delta}_{\text{max}}$, from equation~\eqref{eq:thmQ} we have
\begin{align}
&||Q_j||\leq 2||P||\cdot ||\sum_{k=1}^n \Delta \delta_k D_{3n+k}||\leq 2t\cdot \overline{\delta}_{\text{max}}<\gamma,\label{eq:thmineq}
\end{align}
hence, the largest absolute eigenvalue of each $Q_j$ is less than $\gamma$, and $PA_j^v+(A_j^v)^TP$ is negative definite, $j=1,\cdots,2^n$. This completes the proof.
\end{proof}
\end{appendices}

\bibliographystyle{ieeetran}
\bibliography{DCMG}
\end{document}